\theoremstyle{plain}
\newtheorem{thm}[subsubsection]{Theorem}
\newtheorem{lemma}[subsubsection]{Lemma}
\newtheorem{prop}[subsubsection]{Proposition}
\newtheorem{cor}[subsubsection]{Corollary}
\theoremstyle{definition}
\newtheorem{example}[subsubsection]{Example}
\theoremstyle{definition}
\def\cC{\mathcal{C}}
\def\cX{\mathcal{X}}
\def\cY{\mathcal{Y}}
\def\cZ{\mathcal{Z}}
\def\11{\mathbf{1}}
\def\CC{\mathbf{C}}
\def\ZZ{\mathbf{Z}}
\def\Hom{\mathrm{Hom}}
\def\id{\mathrm{id}}
\def\Spec{\mathrm{Spec}}
\def\Map{\mathrm{Map}}
\def\PreGrpd{\mathrm{PreGrpd}}
\def\Grpd{\mathrm{Grpd}}
\def\L{\mathbf{L}}
\def\R{\mathbf{R}\!}
\def\E{\mathbb{E}}
\def\B{\mathbb{B}}
\def\Aut{\mathrm{Aut}}
\def\Ho{\mathrm{Ho}}
\newcommand{\mapright}[1]{\xrightarrow{#1}}
\newcommand{\htimes}{\times^h}
\newcommand{\holim@}[2]{%
      \vtop{\m@th\ialign{##\cr
                  \hfil$#1\operator@font holim\,$\hfil\cr
                      \noalign{\nointerlineskip\kern1.5\ex@}#2\cr
                          \noalign{\nointerlineskip\kern-\ex@}\cr}}%
                      }
                      \newcommand{\holim}{%
                            \mathop{\mathpalette\holim@{\leftarrowfill@\textstyle}}\nmlimits@
                        }
                        \title{Homotopy limits and fixed point stacks}
\author{R. Virk}
\address{The Appalachians}
\begin{document}
\maketitle
%\setcounter{tocdepth}{1}
%\tableofcontents
%\renewcommand{\thesubsection}{\textbf{\arabic{section}.\arabic{subsection}}}
%\renewcommand{\thesubsection}{\textbf{\arabic{subsection}}}
\renewcommand{\thesubsection}{\arabic{subsection}}
\subsection{Introduction}
We discuss homotopy limits of stacks in groupoids. Inertia stacks, the usual stack fibre product and fixed point stacks are all incarnations of these.
We examine the compatibility of homotopy limits with each other and with stackification. 

The \emph{statements} of the results make no mention of homotopy theory. However, the central organizing principle of this note, used explicitly in some of the proofs, is that the category of simplicial (pre)sheaves on a Grothendieck site may be endowed with a simplicial model structure such that fibrant objects satisfy descent. We do not work with simplicial sheaves though, focusing on (pre)sheaves of groupoids instead. Of course, groupoids may be embedded into the category of simplical sets via the nerve construction. 
However, groupoids have special features (their nerves are 2-coskeletal Kan complexes) not enjoyed by arbitrary simplicial sets. On a technical level this is exemplified by Lemma \ref{lem:key} whose na\"ive analogue for arbitrary simplicial sets is false.\footnote{Lemma \ref{lem:key} is essentially a statement about compact objects. In the category of simplicial sets these are given by objects with finitely many non-degenerate simplices. The nerve of a finite category may not satisfy this - eg. the classifying space of any non-trivial finite group. This leads to the so-called canonical mistake: the \emph{false} assumption that finite homotopy limits of simplicial sets commute with filtered colimits. However, this \emph{does} hold for groupoids.}

\subsection{Notation and conventions}
\subsubsection{}`Map', `morphism' and `arrow' are used interchangeably. 
\subsubsection{} `Canonical' is written in lieu of `a natural transformation of functors'. 
\subsubsection{} A map of categories means a functor of categories.
\subsubsection{} Given small categories $K$ and $G$, set
\[ \Map(K,G) = \text{category of functors $K\to G$}. \]
Morphisms are natural transformations. Note that the obvious canonical map
\[ \Map(K\times H, G) \to \Map(K, \Map(H, G)) \]
is an isomorphism (the so-called exponential law).
\subsubsection{} The symbol $\ast$ is reserved for the final object in a category (if it exists). 
\subsubsection{} The symbol $\cong$ is only used for isomorphisms. Other notions of equivalence (equivalence of categories, weak equivalence, etc.) are denoted by $\simeq$ or $\mapright{\sim}$. 
\subsubsection{} A \emph{finite category} is a category with finite sets of objects and morphisms.
\subsubsection{} A groupoid is a small category in which all maps are invertible. We set
\[ \Grpd = \text{category of groupoids}.\]
To emphasize, $\Grpd$ is a strict (1-)category (as opposed to a 2-category).
\subsubsection{} A map of groupoids is an \emph{equivalence} if it is an equivalence of categories. 
\subsubsection{} A map of groupoids $f\colon X\to Y$ is a \emph{fibration} if, for each $x\in X$ and each isomorphism $\alpha\colon f(x)\to y$ in $Y$, there exists an isomorphism $x\to x_1$ in $X$, such that $f(\beta) = \alpha$. A fibration that is also an equivalence is called \emph{acyclic}
\subsubsection{} Given a group $G$ and a $G$-set $X$, we write $\E_G X$ for the groupoid with set of objects $X$ and an arrow $x\mapright{g} y$ for each $g\in G$ such that $g(x) = y$.
We set
    \[ \E G = \E_G G \qquad \text{and} \qquad \B G = \E_G\{\ast\},\]
where $G$ acts on itself via multiplication.

\subsection{Homotopy limits: groupoids}
Let $\Gamma$ be a small category. For $\alpha\in\Gamma$, the overcategory $(\Gamma \downarrow \alpha)$ is the category whose objects are morphisms $\beta\to \alpha$ in $\Gamma$. Maps are given by the evident commutative triangles.
Let $X$ be a $\Gamma$-diagram in $\Grpd$ (i.e., $X$ is a functor $\Gamma\to\Grpd$). Its \emph{homotopy limit}, denoted $\holim_{\Gamma}X$, is the equalizer of 
\[ 
\xymatrix{
    \underset{\alpha\in \Gamma}{\prod} \Map((\Gamma\downarrow \alpha), X(\alpha))\ar@<-1ex>[r]_-{\psi}\ar@<1ex>[r]^-{\phi} & \underset{\{\alpha \to \beta\}\in \Gamma}{\prod}\Map((\Gamma\downarrow\alpha), X(\beta)),
} \]
where $\phi$ is induced by the natural maps
\[ \Map((\Gamma\downarrow \alpha), X(\alpha)) \to \Map((\Gamma\downarrow \alpha), X(\beta)) \]
given by composition with $X_{\alpha}\to X_{\beta}$ (for each $\alpha\to\beta$);
and $\psi$ is induced by the maps
\[ \Map((\Gamma\downarrow \beta), X(\beta)) \to \Map((\Gamma\downarrow \alpha), X(\beta)), \]
given by pre-composition with the evident map $(\Gamma\downarrow\alpha)\to(\Gamma\downarrow\beta)$ (for each $\alpha\to\beta$).
As all small limits exist in the category of groupoids (they are given by limits of sets on objects and morphisms), this equalizer always exists.
If $f\colon X\to Y$ is a map of $\Gamma$-diagrams (i.e., a natural transformation of functors), we write
\[ f_*\colon \holim_{\Gamma}X \to \holim_{\Gamma} Y\]
for the obvious induced map.
The usual limit of the $\Gamma$-diagram $X$ is the equalizer of
\[ 
\xymatrix{
    \underset{\alpha\in \Gamma}{\prod} X(\alpha)\ar@<-1ex>[r]\ar@<1ex>[r] & \underset{\{\alpha \to \beta\}\in \Gamma}{\prod} X(\beta),
} \]
where the maps are the analogues of $\phi$ and $\psi$ above. This may be rewritten as
\[ 
\xymatrix{
    \underset{\alpha\in \Gamma}{\prod} \Map(\ast, X(\alpha))\ar@<-1ex>[r]\ar@<1ex>[r] & \underset{\{\alpha \to \beta\}\in \Gamma}{\prod} \Map(\ast, X(\beta)).
} \]
So there is a canonical map $\varprojlim_{\Gamma}X \to \holim_{\Gamma}X$. This map is usually not an equivalence.

\begin{example}\label{eg:homotopypullback}
    Let $\Gamma$ be the pullback category. I.e., $\Gamma$ has three objects and non-identity maps depicted by the diagram $\bullet\to\bullet\gets\bullet$. A $\Gamma$-diagram amounts to maps $f\colon X\to Z$ and $g\colon Y\to Z$ in $\Grpd$. The corresponding homotopy limit is called the \emph{homotopy pullback} and is denoted $X\htimes_Z Y$. For objects, we have
    \[ X\htimes_Z Y = \{(x,y,z,\alpha,\beta)\mid x\in X, y\in Y, z\in Z, \alpha\in\Hom(f(x),z), \beta\in\Hom(g(y),z) \}.\]
    Maps are the obvious commutative squares. There is a canonical equivalence
    \[ X\htimes_Z Y \simeq \{(x,y,\phi) \mid x\in X, y\in Y, \phi\in \Hom(f(x), g(y)) \} \]
    given by the map $(x,y,z,\alpha,\beta)\mapsto (x,y,\beta^{-1}\alpha)$.

    Given a map of pullback diagrams $\{X\to Z\gets Y\} \to \{X_1\to Z_1\gets Y_1\}$, if the maps $X\to X_1$, $Y\to Y_1$, $Z\to Z_1$ are equivalences, then so is the induced map $X\htimes_Z Y \to X_1\htimes_{Z_1} Y_1$ (Proposition \ref{prop:homotopy}).
%given a commutative diagram of groupoids
%    \[ \xymatrix{
%            X\ar[r]\ar[d]& Z\ar[d]&Y \ar[l]\ar[d] \\
%            X_1\ar[r]&Z_1&Y_1\ar[l]
%    }\]
%    if all of the vertical maps are weak equivalences, then the induced canonical map $X\htimes_{Z} Y \to X_1\htimes_{Z_1}Y_1$ is also a weak equivalence. 
    For a direct proof, see \cite[Tag 02XA]{Stacks}. 
    %Alternatively, observe that this is the so-called co-glueing Lemma for a right proper model category and $\Grpd$ has this structure with the given class of weak equivalences and fibrations \cite{H}. 
\end{example}

%Let $G$ be a group and let $X$ be a $G$-set. Write $\E_G X$ for the groupoid with set of objects $X$ and an arrow $x\mapright{g} y$ for each $g\in G$ such that $g(x) = y$. A map of $G$-sets $X\to Y$ (i.e., an equivariant map) induces a map $\E_GX\to \E_GY$. If $X\to Y$ is an isomorphism of $G$-sets, then $\E_GX \to \E_G Y$ is an isomorphism of groupoids. If $N\subset G$ is a normal subgroup that acts on $X$ freely, then the evident canonical map $\E_GX \to \E_{G/N}X/N$ is an acyclic fibration. For future use, we set
%    \[ \E G = \E_G G \qquad \text{and} \qquad \B G = \E_G\{\ast\},\]
%where $G$ acts on itself via multiplication.

\begin{example}\label{eg:hfix}
    Let $\Gamma$ be a group. We abuse notation and write $\Gamma$ for the category $\B\Gamma$. A $\Gamma$-diagram in $\Grpd$ amounts to a groupoid $X$ equipped with a $\Gamma$-action. In other words, $\Gamma$ acts on the sets of objects and morphisms of $X$ in a manner compatible with the structure maps of $X$ as a category. Set
    \[ X^{h\Gamma} = \holim_{\Gamma} X. \]
    Then $X^{h\Gamma}$ is the groupoid of \emph{homotopy fixed points}.
An object of $X^{h\Gamma}$ amounts to a pair $(x,\phi)$, where $x$ is an object of $X$ and $\phi$ is a rule that assigns, to each $g\in \Gamma$, a morphism
$\phi(g)\colon x \to gx$
in $X$ satisfying $\phi(1) = \id$ and
\begin{equation}\label{eq:cocycle} \phi(gh) = g\phi(h)\circ \phi(g),\end{equation}
for all $g,h\in\Gamma$.
An arrow $(x,\phi) \to (x_1,\phi_1)$ is a morphism $\alpha\colon x\to x_1$ in $X$ such that
\[ \phi_1(g)\circ \alpha = g\alpha\circ \phi(g), \]
for all $g\in \Gamma$. 
%If $f\colon X\to Y$ is an equivariant map between groupoids equipped with $\Gamma$-actions, then we obtain a morphism of groupoids $f_*\colon X^{h\Gamma}\to Y^{h\Gamma}$ via 
%\[f_*((x,\phi)) = (f(x),g\mapsto f(\phi(g))), \qquad f_*(\alpha) = f(\alpha). \]

The ordinary $\Gamma$-fixed points of $X$ are not invariant under equivalences: the map $\E\Gamma\to\ast$ is an acyclic fibration, but the $\Gamma$-action on $\E\Gamma$ is free.
Homotopy fixed points remedy this (Proposition \ref{prop:homotopy}).
A direct elementary proof for $\Gamma=\ZZ/2\ZZ$ may be found in \cite[Proposition 2.1.6]{Virk}. The same argument applies to an arbitrary group.
\end{example}

\begin{example}\label{eg:galois}Continue with the setup of the previous example. Suppose $G$ is a group on which $\Gamma$ acts via automorphisms. Let 
\[ Z^1(\Gamma; G) = \{\text{$G$-valued 1-cocycles of $\Gamma$}\}. \]
I.e., $Z^1(\Gamma; G)$ consists of functions $\sigma\colon \Gamma \to G$ satisfying
    \[ \sigma(gh) = \sigma(g)\cdot g\sigma(h) \]
    for all $g,h\in\Gamma$. Let $\E_GZ^1(\Gamma;G)$ be the groupoid with set of objects $Z^1(\Gamma; G)$, and an arrow $\sigma\to \sigma_1$ for each $\alpha\in G$ such that
    \[ \sigma_1(g) = \alpha\cdot \sigma(g) \cdot g\alpha^{-1}, \]
    for all $g\in\Gamma$. The set of isomorphism classes of objects in $\E_G Z^1(\Gamma; G)$ is the first non-abelian cohomology $H^1(\Gamma; G)$. The automorphism group of $\sigma\in \E_G Z^1(\Gamma; G)$ is 
    \[ K_{\sigma} = \{ \alpha\in G \mid \sigma(g)\cdot g\alpha\cdot \sigma(g)^{-1} = \alpha \text{ for all $g\in \Gamma$}\}.\]
    Thus, there is an equivalence
    \[ \E_GZ^1(\Gamma; G) \simeq \bigsqcup_{[\sigma]\in H^1(\Gamma; G)} \B K_{\sigma}, \]
    This equivalence depends on picking an object in each isomorphism class $[\sigma]$. In particular, there is usually no way to make it canonical.

    The $\Gamma$-action on $G$ yields a $\Gamma$-action on $\B G$. Objects in $(\B G)^{h\Gamma}$ amount to functions $\phi\colon \Gamma \to G$ satisfying $\phi(1) = 1$ and \eqref{eq:cocycle}. To such a $\phi$, assign a cocycle $\sigma_{\phi}$ by setting
   $\sigma_{\phi}(g)= \phi(g)^{-1}$,
   for all $g\in \Gamma$. This yields a canonical isomorphism
   \[ (\B G)^{h\Gamma} \mapright{\cong} \E_G Z^1(\Gamma; G). \]
   Hence, we obtain an equivalence
   \[ (\B G)^{h\Gamma} \simeq \bigsqcup_{[\sigma]\in H^1(\Gamma; G)} \B K_{\sigma}. \]
   In principle, this gives a presentation of $X^{h\Gamma}$ for an arbitrary groupoid $X$, since any groupoid is equivalent to one of the form $\bigsqcup_i \B G_i$. 
   %As before, this involves picking objects in isomorphism classes and usually cannot be made canonical.
\end{example}

\begin{example}\label{eg:inertia}
    %The following special case of the above needs to singled out.
    Let $\Gamma=\ZZ$ and let $X\in\Grpd$. Consider the trivial action of $\ZZ$ on $X$. Then $X^{h\ZZ}$ is called the \emph{free loop groupoid} of $X$, and is denoted $LX$. Its objects amount to
    \[ LX = \{ (x,\phi) \mid x\in X, \phi\in \Aut(x) \}. \]
    There is a canonical equivalence
    \[ LX \mapright{\sim} X \htimes_{X\times X} X, \]
    where the pullback is over the diagonal map $X\to X\times X$  (see \cite[Tag 04Z2]{Stacks}).
\end{example}

\begin{prop}\label{prop:homotopy}Let $\Gamma$ be a small category and let $f\colon X\to Y$ be a map of $\Gamma$-diagrams in $\Grpd$. If $f(\alpha)\colon X(\alpha) \to Y(\alpha)$ is an equivalence (resp. fibration) of groupoids for each $\alpha\in\Gamma$, then $f_*\colon \holim_{\Gamma}X \to \holim_{\Gamma} Y$ is an equivalence (resp. fibration). 
\end{prop}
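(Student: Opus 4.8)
\emph{Plan.} The strategy is to reduce both statements to a single assertion: that $\holim_{\Gamma}$ carries \emph{objectwise acyclic fibrations} (maps that are, objectwise, fully faithful and surjective on objects) to acyclic fibrations. Two formal facts drive the reduction. First, for any small category $K$ the functor $\Map(K,-)$ is a right adjoint, so it commutes with the equalizer defining $\holim_{\Gamma}$; together with the exponential law this yields a canonical isomorphism $\Map(K,\holim_{\Gamma}X)\cong\holim_{\Gamma}\Map(K,X(-))$. Second, $\holim_{\Gamma}$ is itself a limit, hence commutes with objectwise limits, in particular with objectwise pullbacks. I write $\I$ for the interval groupoid $\{0\xrightarrow{\sim}1\}$, and freely use the standard facts about the cartesian closed structure on $\Grpd$: a map is a fibration iff it has the right lifting property against $\{0\}\hookrightarrow\I$; an acyclic fibration is exactly a fully faithful functor surjective on objects; and the Leibniz cotensor of the acyclic cofibration $\{0\}\hookrightarrow\I$ with a fibration is an acyclic fibration.

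\emph{Fibrations.} Suppose each $f(\alpha)$ is a fibration. To see that $f_*$ is a fibration I test lifting against $\{0\}\hookrightarrow\I$, i.e.\ I must show that the canonical map $\Map(\I,\holim_{\Gamma}X)\to\holim_{\Gamma}X\times_{\holim_{\Gamma}Y}\Map(\I,\holim_{\Gamma}Y)$ is surjective on objects. By the two facts above this map is identified with $\holim_{\Gamma}$ applied to the objectwise Leibniz map $\Map(\I,X(-))\to X(-)\times_{Y(-)}\Map(\I,Y(-))$, and the latter is an objectwise acyclic fibration because $\{0\}\hookrightarrow\I$ is an acyclic cofibration and each $f(\alpha)$ is a fibration. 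So this case follows from the key assertion.

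\emph{Equivalences.} Suppose each $f(\alpha)$ is an equivalence. Using the mapping path groupoid I factor $f$ objectwise and functorially as $X\xrightarrow{j}Pf\xrightarrow{q}Y$, where $Pf(\alpha)=X(\alpha)\times_{Y(\alpha)}\Map(\I,Y(\alpha))$; since the construction is functorial in $\alpha$ this is a factorization of $\Gamma$-diagrams. Here $q$ is an objectwise fibration, the projection $r\colon Pf\to X$ is an objectwise acyclic fibration, and $rj=\id$. As $f$ is an objectwise equivalence, $q$ is an objectwise acyclic fibration. Applying $\holim_{\Gamma}$ and invoking the key assertion, $\holim_{\Gamma}q$ and $\holim_{\Gamma}r$ are equivalences; from $\holim_{\Gamma}r\circ\holim_{\Gamma}j=\id$ and two-out-of-three, $\holim_{\Gamma}j$ is an equivalence; hence $f_*=\holim_{\Gamma}q\circ\holim_{\Gamma}j$ is an equivalence.

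\emph{The crux.} It remains to prove the key assertion: that $\holim_{\Gamma}$ preserves objectwise acyclic fibrations $g\colon U\to V$. Unwinding the equalizer, an object (resp.\ morphism) of $\holim_{\Gamma}V$ is a coherent family of functors $(\Gamma\downarrow\alpha)\to V(\alpha)$ (resp.\ natural transformations), and lifting it through $g$ amounts to choosing objectwise lifts that remain compatible with the transition maps of the overcategory weight $\alpha\mapsto(\Gamma\downarrow\alpha)$. Objectwise lifts exist, since each $g(\alpha)$, being an acyclic fibration, has the right lifting property against every cofibration; the difficulty — and the main obstacle — is \emph{coherence}: assembling these into a genuine element of the equalizer. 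I would resolve this by exhibiting $\alpha\mapsto(\Gamma\downarrow\alpha)$ as a projectively cofibrant $\Gamma$-diagram, built by attaching free cells indexed by the nondegenerate chains of $\Gamma$, and then lifting one cell at a time against the acyclic fibration $g$. For groupoids this induction is harmless: since the targets $V(\alpha)$ are groupoids, hence $2$-coskeletal, only the $2$-truncation of the weight is relevant, the lifting obstructions live in finitely many dimensions, and they vanish. This is precisely the groupoid-specific phenomenon flagged in the introduction, and it is the step where the argument genuinely uses that we work with groupoids rather than arbitrary simplicial sets.
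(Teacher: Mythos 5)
Your overall architecture is sound, and it is genuinely different from the paper's. The paper transports everything to simplicial sets along the nerve (full, faithful, limit-preserving, preserving and reflecting fibrations and weak equivalences), observes that nerves of groupoids are Kan complexes, and then quotes \cite[Theorems 18.5.1 and 18.5.3]{Hir}. You instead work internally in $\Grpd$ with its canonical model structure and reduce both halves of the statement to a single claim --- that $\holim_{\Gamma}$ preserves objectwise acyclic fibrations --- via the cotensor/Leibniz argument for fibrations and the mapping-path factorization plus two-out-of-three for equivalences. Both reductions are correct as written. Note, though, that they quietly use the genuinely groupoid-specific fact that every groupoid is fibrant: this is what makes $\Map(\I, Y(\alpha)) \to Y(\alpha)$ an acyclic fibration and $q$ a fibration with no hypotheses, whereas in simplicial sets these steps would require objectwise Kan-ness --- exactly the hypothesis appearing in Hirschhorn's theorems.

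The gap is in your crux, which you state but do not prove, and the justification you sketch invokes the wrong mechanism. Two points. First, a small one: $\alpha \mapsto (\Gamma \downarrow \alpha)$ is a diagram of categories, not groupoids, so to speak of projective cofibrancy in $\Gamma$-diagrams of groupoids you must pass to $\alpha \mapsto \mu(\Gamma\downarrow\alpha)$ (inverting all morphisms); this is harmless since $\Map(K,G) \cong \Map(\mu K, G)$ for $G$ a groupoid, but it should be said. Second, and more seriously: lifting against an acyclic fibration involves no obstruction theory and no appeal to $2$-coskeletality. Acyclic fibrations of groupoids have the right lifting property against \emph{all} cofibrations, so there are no obstructions to vanish; conversely, if obstructions were genuinely present, finiteness of dimensions would not make them vanish. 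The actual mechanism that closes your induction is freeness of the weight: $\mu(\Gamma\downarrow -)$ is a cell complex in the projective model structure whose cells are free diagrams $\Hom_{\Gamma}(\beta,-)\times A \to \Hom_{\Gamma}(\beta,-)\times B$ with $A \to B$ a cofibration in $\Grpd$, and a lifting problem for such a cell against the objectwise acyclic fibration $g$ transposes, by Yoneda and the exponential law, to a single lifting problem in $\Grpd$ against $g(\beta)$. Coherence across $\Gamma$ is precisely what this freeness buys; coskeletality plays no role. Relatedly, your closing claim that the crux is ``where the argument genuinely uses groupoids'' is incorrect: preservation of objectwise acyclic fibrations by weighted limits with projectively cofibrant weights holds verbatim for diagrams of arbitrary simplicial sets, with no fibrancy hypotheses; as noted above, groupoid-ness enters in your reductions, not in the crux. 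As it stands, then, the key assertion rests on an argument that does not hold together; it can be repaired either by the transposition argument just indicated, or by citing the standard facts on free/cofibrant diagrams and pullback corner maps in \cite[Chapters 14 and 18]{Hir} --- which is, in effect, what the paper does after applying the nerve.
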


\begin{proof}
    %This is essentially \cite[Chapter XI 5.5-5.6]{BK}. However, \cite{BK} is written in the language of simplicial sets. We orient the reader using the reference \cite{Hir} since our notation is closer to the latter.
    Let $B$ denote the nerve functor from the category of small categories to the category of simplicial sets. Then $B$ is full, faithful and commutes with small limits. Consequently, if $F$ and $G$ are small categories, we have a canonical isomorphism 
    %\cite[Section 4.4]{Gray} (\cite[Section 3]{T} has a nice exposition):
    \[ B\Map(F, G) \mapright{\cong}\Map(BF, BG), \]
where the right hand side is the simplicial function complex. 
Thus, $B\holim_{\Gamma}X$ is canonically isomorphic to the equalizer of
\[ 
\xymatrix{
    \underset{\alpha\in \Gamma}{\prod} \Map(B(\Gamma\downarrow \alpha), BX(\alpha))\ar@<-1ex>[r]_-{B\psi}\ar@<1ex>[r]^-{B\phi} & \underset{\{\alpha \to \beta\}\in \Gamma}{\prod}\Map(B(\Gamma\downarrow\alpha), BX(\beta))
} \]
and similarly for $B\holim_{\Gamma}Y$. This is the homotopy limit formula of \cite[Definition 18.1.8]{Hir} for simplicial sets. 
As $B$ preserves and reflects weak equivalences as well as fibrations, we are reduced to showing that
%(see \cite[Section 5]{A})
\[ Bf_*\colon \holim_{\Gamma}BX \to \holim_{\Gamma}BY \]
is a weak equivalence/fibration of simplicial sets in our situation.
As each $X(\alpha)$ and each $Y(\alpha)$ is a groupoid, it follows that each $BX(\alpha)$ and each $BY(\alpha)$ is a Kan complex. 
Now the desired statement regarding weak equivalences is \cite[Theorem 18.5.3 (2)]{Hir}, and that for fibrations is \cite[Theorem 18.5.1 (2)]{Hir}.
\end{proof}

\begin{prop}\label{prop:commute}Let $\Gamma$ and $\Delta$ be small categories and let $X$ be a $\Gamma\times\Delta$-diagram in $\Grpd$. Then we have canonical isomorphisms
    \[ \holim_{\Gamma}\holim_{\Delta}X \cong \holim_{\Gamma \times \Delta}X \cong \holim_{\Delta}\holim_{\Gamma}X.\]
\end{prop}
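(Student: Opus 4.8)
The plan is to reinterpret the defining equalizer as an \emph{end}. For a functor $T\colon\Gamma^{\op}\times\Gamma\to\Grpd$, recall that the end $\int_{\alpha}T(\alpha,\alpha)$ is the equalizer of the two canonical maps $\prod_{\alpha}T(\alpha,\alpha)\rightrightarrows\prod_{\{\alpha\to\beta\}}T(\alpha,\beta)$. Taking $T(\alpha,\beta)=\Map((\Gamma\downarrow\alpha),X(\beta))$ --- contravariant in $\alpha$ via precomposition with $(\Gamma\downarrow\alpha)\to(\Gamma\downarrow\beta)$, and covariant in $\beta$ via $X(\alpha)\to X(\beta)$ --- one checks that the two structure maps of this end are exactly $\phi$ and $\psi$. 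Hence
\[ \holim_{\Gamma}X \cong \int_{\alpha\in\Gamma}\Map((\Gamma\downarrow\alpha),X(\alpha)). \]
Such ends exist, being particular small limits, and all small limits exist in $\Grpd$.

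Next I would assemble three elementary ingredients. First, the overcategory of a product decomposes canonically: an object of $((\Gamma\times\Delta)\downarrow(\alpha,\delta))$ is a pair of arrows $\beta\to\alpha$ and $\epsilon\to\delta$, so there is an isomorphism $((\Gamma\times\Delta)\downarrow(\alpha,\delta))\cong(\Gamma\downarrow\alpha)\times(\Delta\downarrow\delta)$, natural in $(\alpha,\delta)$. Second, the exponential law gives $\Map((\Gamma\downarrow\alpha)\times(\Delta\downarrow\delta),X(\alpha,\delta))\cong\Map((\Gamma\downarrow\alpha),\Map((\Delta\downarrow\delta),X(\alpha,\delta)))$. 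Third, since the exponential law exhibits $\Map(K,-)$ as right adjoint to $K\times(-)$ for every $K$, the functor $\Map(K,-)$ preserves all small limits, in particular ends.

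Now I would run the following chain of canonical isomorphisms, starting from the middle term. Writing the homotopy limit as an end and applying the first two ingredients,
\[ \holim_{\Gamma\times\Delta}X\cong\int_{(\alpha,\delta)}\Map((\Gamma\downarrow\alpha),\Map((\Delta\downarrow\delta),X(\alpha,\delta))). \]
By the Fubini theorem for ends, the end over $\Gamma\times\Delta$ is the iterated end $\int_{\alpha}\int_{\delta}$. Using the third ingredient to pull $\Map((\Gamma\downarrow\alpha),-)$ through the inner end over $\delta$, the inner end becomes $\holim_{\Delta}X(\alpha,-)$, and the outer end becomes $\holim_{\Gamma}$ of the resulting $\Gamma$-diagram $\alpha\mapsto\holim_{\Delta}X(\alpha,-)$. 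This yields $\holim_{\Gamma}\holim_{\Delta}X$. Interchanging the roles of $\Gamma$ and $\Delta$ --- equivalently, performing the Fubini interchange in the order $\int_{\delta}\int_{\alpha}$ --- produces $\holim_{\Delta}\holim_{\Gamma}X$, and every isomorphism in sight is natural in $X$.

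The step I expect to be the main obstacle is the Fubini interchange of iterated ends together with the correct bookkeeping of the four variances (the two copies of $\alpha$ and the two copies of $\delta$, each occurring once contravariantly through an overcategory and once covariantly through $X$). Although this is a standard fact about ends, one must verify that the dinaturality conditions defining the end over $\Gamma\times\Delta$ match, under the decomposition of the overcategory and the exponential law, the nested dinaturality conditions of the iterated end; this is precisely what makes the displayed isomorphisms canonical rather than merely existent. As an alternative that sidesteps the end-theoretic formalities, one could apply the nerve functor $B$, which commutes with small limits (as recalled in the proof of Proposition \ref{prop:homotopy}), and deduce the statement from the corresponding Fubini property for homotopy limits of simplicial sets; the end-based argument, however, has the advantage of producing the isomorphisms directly in $\Grpd$.
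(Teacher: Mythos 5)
Your proof is correct and is essentially the paper's own argument: the paper's proof invokes exactly your three ingredients---the exponential law, preservation of limits by $\Map(K,-)$, and the commutation of limits with each other (of which the Fubini theorem for ends is the precise instance needed)---applied to the defining equalizer formula for the homotopy limit. Your end-theoretic packaging, together with the explicit decomposition $((\Gamma\times\Delta)\downarrow(\alpha,\delta))\cong(\Gamma\downarrow\alpha)\times(\Delta\downarrow\delta)$, simply spells out the bookkeeping that the paper leaves to the reader.
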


\begin{proof}
%    Use the nerve functor as in the proof of Proposition \ref{prop:homotopy} and invoke \cite[Chapter XI 4.3]{BK}.
    For a small category $K$, the functor $\Map(K, - )$ commutes with limits and satisfies the exponential law. Further, limits always commute with each other. These statements applied to our formula for the homotopy limit give the desired result.
\end{proof}

\subsection{Filtered colimits}All small colimits exist in $\Grpd$. 
%(see \cite[Chapter 9]{Hig} or \cite[Appendix A]{H})
Unlike limits, colimits can be nebulous to describe explicitly. However, filtered colimits present no difficulties: they are given by filtered colimits of sets on objects and morphisms.

\begin{lemma}\label{lem:colfinite}
    Filtered colimits of groupoids commute with finite products of groupoids.
\end{lemma}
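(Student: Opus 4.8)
The plan is to reduce the assertion to the corresponding statement for sets, namely that filtered colimits commute with finite products in $\Set$, and then to transport this back to $\Grpd$ using the fact (recorded just above the lemma) that both operations are computed on the underlying sets of objects and of morphisms. Concretely, fix a filtered category $I$ and finitely many diagrams $X_1,\dots,X_n\colon I\to\Grpd$, and consider the canonical comparison functor
\[ c\colon \varinjlim_{i\in I}\bigl(X_1(i)\times\cdots\times X_n(i)\bigr) \to \Bigl(\varinjlim_{i\in I}X_1(i)\Bigr)\times\cdots\times\Bigl(\varinjlim_{i\in I}X_n(i)\Bigr), \]
induced by the colimit coprojections in each factor. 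The goal is to show $c$ is an isomorphism.

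First I would apply $\Ob$ and $\Mor$ to $c$. Since the product $X_1\times\cdots\times X_n$ has object set $\prod_k\Ob X_k$ and morphism set $\prod_k\Mor X_k$ (with source, target, identity, and composition all defined coordinatewise), and since a filtered colimit in $\Grpd$ is computed as the filtered colimit of object sets and of morphism sets, applying $\Ob$ and $\Mor$ to $c$ yields exactly the set-level comparison maps
\[ \varinjlim_{i}\prod_k \Ob X_k(i) \to \prod_k \varinjlim_i \Ob X_k(i), \qquad \varinjlim_{i}\prod_k \Mor X_k(i) \to \prod_k \varinjlim_i \Mor X_k(i). \]
As $\Grpd$ is a strict $1$-category, a functor is an isomorphism precisely when it is a bijection on objects and on morphisms, so it suffices to show these two maps are bijective.

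Second I would prove the set-level claim: for filtered $I$ and set-valued diagrams $A_1,\dots,A_n$, the map $\varinjlim_i \prod_k A_k(i)\to\prod_k\varinjlim_i A_k(i)$ is a bijection. By induction on $n$ it is enough to treat $n=2$ (the case $n=1$ being the identity and $n=0$ using that a filtered category is connected, whence $\varinjlim_i \ast=\ast$). Surjectivity uses filteredness: given classes represented by $a\in A_1(i)$ and $b\in A_2(j)$, choose a common target $k$ over $i$ and $j$, and the images of $a,b$ in $A_1(k),A_2(k)$ assemble to a preimage. Injectivity uses filteredness again: if two elements of $(A_1\times A_2)(k)$ have equal images, then each coordinate is identified after passing to some later index, and filteredness produces a single index at which both coordinates, hence the pair, are identified. (Alternatively, one simply invokes the standard fact that filtered colimits commute with finite limits in $\Set$.)

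Finally I would record the one point needing a word of justification, namely that these object- and morphism-level bijections are compatible with the groupoid structure maps, so that $c$ is an isomorphism of groupoids and not merely a bijection on underlying data. This is immediate from naturality: the maps $s,t$, the identity-assigning map, and composition are all induced levelwise, and the colimit comparison isomorphisms are natural with respect to each of them. I do not expect any genuine obstacle here; the entire substance of the lemma is the elementary set-theoretic fact of the second step, and the groupoid statement then follows formally.
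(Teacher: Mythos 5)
Your proposal is correct and takes essentially the same route as the paper, whose entire proof is ``Immediate from the same for the category of sets'': you have simply spelled out the reduction (both filtered colimits and finite products in $\Grpd$ are computed on underlying object and morphism sets, and a functor is an isomorphism iff bijective on both) together with the standard set-level fact. No gaps; your elaboration matches the paper's intended argument exactly.
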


\begin{proof}Immediate from the same for the category of sets.
\end{proof}

\begin{lemma}\label{lem:key}
    Let $I$ be a small filtered category and let $X$ be an $I$-diagram in $\Grpd$. If $K$ is a finite category, then the canonical map
    \[ \varinjlim_I\Map(K, X(i)) \to \Map(K, \varinjlim_I X(i)) \]
    is an isomorphism.
\end{lemma}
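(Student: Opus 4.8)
The plan is to unwind all three functors in sight down to their underlying sets of objects and morphisms, thereby reducing the statement to the corresponding well-known facts about the category of sets. Write $Y = \varinjlim_I X(i)$. Since filtered colimits of groupoids are computed on objects and morphisms, we have $\Ob Y = \varinjlim_I \Ob X(i)$ and $\Mor Y = \varinjlim_I \Mor X(i)$ as filtered colimits of sets, with source, target, identities and composition induced from the $X(i)$. Likewise, for a groupoid $Z$ the groupoid $\Map(K, Z)$ has functors $K\to Z$ as objects and natural transformations as morphisms, so the source of the asserted map, $\varinjlim_I \Map(K, X(i))$, has object set $\varinjlim_I\{\text{functors } K \to X(i)\}$ and morphism set $\varinjlim_I\{\text{natural transformations}\}$. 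It therefore suffices to check that the canonical comparison is a bijection on objects and a bijection on morphisms.

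First I would treat bijectivity on objects. Since $K$ is finite, a functor $F\colon K \to Y$ is a finite collection of data: an object $F(k)\in \Ob Y$ for each of the finitely many $k\in\Ob K$, and a morphism $F(u)\in\Mor Y$ for each of the finitely many $u\in\Mor K$; this data is subject to the finitely many relations expressing preservation of sources, targets, identities, and composition. For surjectivity, represent each $F(k)$ and each $F(u)$ by an element at some stage of $I$; as there are finitely many of them and $I$ is filtered, I can choose a single stage $i$ together with maps in $I$ carrying all these representatives into $X(i)$. The finitely many defining relations hold in $Y$, and since two elements agreeing in a filtered colimit of sets already agree at some finite stage, filteredness lets me advance to a stage $i'$ at which all these relations hold in $X(i')$ simultaneously. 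The resulting tuple is then a genuine functor $K\to X(i')$ whose image is $F$. For injectivity, if two functors defined over stages $i$ and $j$ induce the same functor into $Y$, then their finitely many values on objects and morphisms agree in $\Ob Y$ and $\Mor Y$; each such agreement holds at some finite stage, so by filteredness they become equal over a common stage, i.e. they already represent the same element of $\varinjlim_I \Map(K, X(i))$.

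Next, bijectivity on morphisms follows by the identical pattern. A natural transformation $\eta\colon F\to G$ between functors $K\to Y$ is again finite data, namely one morphism $\eta_k\in\Mor Y$ for each $k\in\Ob K$, subject to the finitely many naturality relations indexed by $u\in\Mor K$. One lifts the finitely many components to a common stage, uses filteredness to reach a stage at which the finitely many naturality squares commute, and concludes surjectivity; injectivity follows as before from the fact that equalities in filtered colimits of sets are detected at a finite stage.

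The only real work is organizational: each step rests on the two standard properties of filtered colimits of sets, namely that every finite system of elements is represented at a single stage, and that any relation holding in the colimit already holds at some finite stage (both instances of the principle underlying Lemma \ref{lem:colfinite}). The essential hypothesis, and the precise point at which the argument would break for general simplicial sets, is the finiteness of $K$: it is exactly what guarantees that the data and the relations describing a functor or a natural transformation into $Y$ are finite in number, so that finitely many applications of filteredness suffice.
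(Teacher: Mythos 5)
Your proposal is correct and follows essentially the same route as the paper's proof: the paper likewise exploits finiteness of $K$ to represent the finitely many objects, morphisms, and relations defining a functor (or natural transformation) into $\varinjlim_I X(i)$ at a single stage $X(j)$, using filteredness of $I$. The only cosmetic difference is that you verify bijectivity on objects and morphisms directly, while the paper phrases the same argument as the construction of an explicit inverse map; your write-up simply spells out the injectivity and well-definedness checks that the paper leaves implicit.
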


\begin{proof}
    The inverse is defined as follows. As $K$ is finite, an object of $\Map(K, \varinjlim_I X(i))$ amounts to picking a finite number of objects, and a finite number of morphisms satisfying a finite number of relations in $\varinjlim_I X(i)$. As $I$ is filtered, we may assume that these objects, morphisms and relations come from some $X(j)$. This yields an object of $\varinjlim_I \Map(K, X(i))$ independent of the choices made. Similarly for morphisms.
\end{proof}

\begin{prop}\label{prop:colimits}Let $I$ be a small filtered category and let $\Gamma$ be a finite category. Let $X$ be a $I\times \Gamma$-diagram in $\Grpd$. Then the canonical map
    \[\varinjlim_I \holim_{\Gamma}X \to \holim_{\Gamma}\varinjlim_I X \]
    is an isomorphism.
\end{prop}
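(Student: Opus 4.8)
The plan is to push the filtered colimit $\varinjlim_I$ through each ingredient of the defining formula for $\holim_\Gamma$, exploiting the finiteness of $\Gamma$ at every step. Because $\Gamma$ has finitely many objects and finitely many morphisms, the groupoid $\holim_\Gamma X(i)$ is, for each $i$, the equalizer of a pair of maps between the \emph{finite} products
\[ \prod_{\alpha\in\Gamma}\Map((\Gamma\downarrow\alpha), X(i,\alpha)) \quad\text{and}\quad \prod_{\{\alpha\to\beta\}\in\Gamma}\Map((\Gamma\downarrow\alpha), X(i,\beta)). \]
Moreover each indexing category $(\Gamma\downarrow\alpha)$ is itself finite: its objects are the (finitely many) morphisms $\beta\to\alpha$ of $\Gamma$, and its morphisms are the corresponding commutative triangles, again finitely many.

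First I would observe that $\varinjlim_I$ commutes with the outer equalizer. Both equalizers and filtered colimits in $\Grpd$ are computed on the underlying sets of objects and of morphisms, and filtered colimits of sets commute with finite limits; since an equalizer is a finite limit, this gives the desired commutation. Next, by Lemma \ref{lem:colfinite} the colimit $\varinjlim_I$ commutes with the two finite products above. Finally, since each $(\Gamma\downarrow\alpha)$ is finite, Lemma \ref{lem:key} supplies canonical isomorphisms
\[ \varinjlim_I \Map((\Gamma\downarrow\alpha), X(i,\beta)) \mapright{\cong} \Map\bigl((\Gamma\downarrow\alpha), \varinjlim_I X(i,\beta)\bigr), \]
and likewise with $\alpha$ in place of $\beta$.

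Assembling these three commutations identifies $\varinjlim_I\holim_\Gamma X$ with the equalizer of the corresponding pair of maps built from $\varinjlim_I X$, which is precisely $\holim_\Gamma\varinjlim_I X$. I expect the only real work to lie not in any single commutation --- each is an instance of a stated lemma or a levelwise computation in $\Set$ --- but in checking that these isomorphisms are natural with respect to the two parallel arrows $\phi$ and $\psi$, so that the colimit of the equalizer diagram matches the equalizer of the colimit diagram, and in verifying that the resulting composite isomorphism is exactly the canonical comparison map of the statement. This bookkeeping is routine once one records that every identification above is induced by the structure maps of the diagrams (composition with $X_\alpha\to X_\beta$ and pre-composition with $(\Gamma\downarrow\alpha)\to(\Gamma\downarrow\beta)$), which are visibly preserved under filtered colimits.
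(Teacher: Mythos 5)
Your proposal is correct and follows exactly the route the paper intends: the paper's proof is literally ``Apply Lemma \ref{lem:colfinite} and Lemma \ref{lem:key},'' i.e., commute $\varinjlim_I$ past the equalizer (filtered colimits of sets commute with finite limits), past the finite products (Lemma \ref{lem:colfinite}), and past $\Map((\Gamma\downarrow\alpha), -)$ for the finite categories $(\Gamma\downarrow\alpha)$ (Lemma \ref{lem:key}). You have simply written out the bookkeeping the paper leaves implicit, including the correct observation that finiteness of $\Gamma$ makes both the products and the overcategories finite.
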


\begin{proof}Apply Lemma \ref{lem:colfinite} and Lemma \ref{lem:key}.
\end{proof}

\subsection{Stacks}
Let $\cC$ be a Grothendieck site with enough points. 
By definition of a point, the stalk of any sheaf of sets at a point is a left exact functor to the category of sets. On the other hand, a stalk is defined as a certain (small) colimit of sets (see \cite[Tag 00Y3]{Stacks}). As a small category $I$ is filtered if and only if colimits into the category of sets, indexed by $I$, commute with finite limits, 
%\cite[Theorem 3.1.6]{KS}, 
it follows that stalks are given by filtered colimits. This is also visibly obvious for most commonly used sites (the \'etale site on varieties, open subsets of a sober topological space, etc.).
Set
\[ \PreGrpd(\cC) = \text{category of presheaves of groupoids on $\cC$}.\]
Note: `presheaf' means `strict presheaf' (as opposed to lax). 
%For a point $t$ of $\cC$, and $X\in\PreGrpd(\cC)$, write $X_t$ for the stalk at $t$.
We will not distinguish between an object $X\in\cC$ and the presheaf $\Hom_{\cC}(-, X)$ it represents.

A stalk for an object of $\PreGrpd(\cC)$ is defined using the same colimit formula as for sheaves of sets.
A map $X\to Y$ in $\PreGrpd(\cC)$ is called a \emph{local weak equivalence} (resp. \emph{local fibration}) if it induces an equivalence (resp. fibration) of groupoids on all stalks. The map is a \emph{sectionwise weak equivalence} if it induces an equivalence of groupoids $X(U) \to Y(U)$ for all $U\in \cC$. A sectionwise weak equivalence is always a local weak equivalence. However, the converse does not generally hold.

%In general, `local' will refer to a property required to hold on all stalks. `Sectionwise' will refer to a property required to hold on all sections.
%
An object in $\PreGrpd(\cC)$ is a stack if it is a sheaf of groupoids that satisfies effective descent.
A map between stacks is a local weak equivalence if and only if it is a sectionwise weak equivalence \cite[Proposition 9.28]{J}. In other words, local weak equivalence for stacks amounts to the usual notion of equivalence of stacks. Analogous to the classical process of sheafification, a presheaf of groupoids may be sheafified and then further completed to a stack in a canonical fashion. That is, for each $X\in \PreGrpd(\cC)$, there exists a stack $[X]$ along with a canonical local weak equivalence $X\mapright{\sim} [X]$ (see \cite[Corollary 9.27]{J} or \cite[Tag 02ZO]{Stacks} for details). The stack $[X]$ is called the stack completion or stackification of $X$. For ease of reference, 
\[ [X] = \text{stack completion of $X$.} \]
If $\Gamma$ is a small category and $X$ is a $\Gamma$-diagram in $\PreGrpd(\cC)$, we will write $[X]$ for the diagram obtained by composing with stack completion.
%
%\begin{example}A morphism between sheaves of sets (viewed as groupoids) is a local weak equivalence if and only if it is an isomorphism.
%\end{example}
%
%\begin{example}If $\cC=\ast$, then $\PreGrpd(\cC)$ is the category of groupoids. A local weak equivalence amounts to an equivalence of categories.
%\end{example}

\begin{example}\label{eg:BG}Let $\cC$ be the big \'etale site of complex varieties.\footnote{`Complex variety' = `separated scheme of finite type over $\Spec(\CC)$'.} Let $G$ be a linear algebraic group. Define $BG \in \PreGrpd(\cC)$ by $BG(U)=\B G(U)$ for all $U\in\cC$. I.e., $BG(U)$ consist of a single object with automorphism group $G(U)$. Let $[BG]$ denote the classifying stack of principal $G$-bundles. The map $BG\to [BG]$ sending the single object in $BG(U)$ to the trivial bundle on $U$ is a local weak equivalence (\'etale local triviality of principal bundles). However, it is not a sectionwise weak equivalence.
\end{example}

\subsection{Homotopy limits: stacks}
Let $\Gamma$ be a small category and let $X$ be a $\Gamma$-diagram in $\PreGrpd(\cC)$. Define the homotopy limit $\holim_{\Gamma}X$ sectionwise, i.e., for $U\in \cC$:
\[ (\holim_{\Gamma}X)(U) = \holim_{\Gamma}X(U). \]

\begin{example}As in Example \ref{eg:homotopypullback}, if $\Gamma$ is the pullback category, we write $X\htimes_Z Y$ for the corresponding homotopy limit and call it the \emph{homotopy pullback}. If all the objects involved in the pullback diagram, $\cX,\cY,\cZ$, are stacks, then
    \[ \cX\htimes_{\cZ} \cY \simeq \text{(2-)fibre product of stacks}. \]
\end{example}

\begin{example}
    If $\Gamma$ is a group then, as in Example \ref{eg:hfix}, we set $X^{h\Gamma} = \holim_{\Gamma} X$ and call it the \emph{homotopy fixed points} of the $\Gamma$-action. If $\cX$ is a stack (equipped with a $\Gamma$-action), then the definition of $\cX^{h\Gamma}$ amounts to that of fixed point stack in \cite{R}:
    \[ \cX^{h\Gamma} \simeq \text{$\Gamma$-fixed point stack}. \]
\end{example}

\begin{example}
    As in Example \ref{eg:inertia}, if $X \in \PreGrpd(\cC)$, we write $LX$ for the homotopy fixed points of $\ZZ$ acting on $X$ trivially. If $\cX$ is a stack, this amounts to the inertia stack 
    \[ L\cX \simeq \text{inertia stack of $\cX$}. \]
\end{example}

\begin{prop}\label{prop:stackinv}Let $\Gamma$ be a small category and let $\cX$ and $\cY$ be $\Gamma$-diagrams of stacks (i.e., $\cX(\alpha)$ and $\cY(\alpha)$ are stacks for each $\alpha\in \Gamma$). Let $f\colon \cX\to\cY$ be a natural transformation of diagrams. If $f(\alpha)\colon \cX(\alpha) \to \cY(\alpha)$ is an equivalence of stacks for each $\alpha\in\Gamma$, then $f_*\colon \holim_{\Gamma}\cX \to \holim_{\Gamma}\cY$ is a sectionwise weak equivalence.
\end{prop}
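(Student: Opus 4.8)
The plan is to reduce the statement entirely to the groupoid-level result Proposition \ref{prop:homotopy}, by unwinding the sectionwise definition of the homotopy limit of stacks. The essential first observation is that an equivalence of stacks carries more information than its name might suggest. Since $\cX(\alpha)$ and $\cY(\alpha)$ are stacks, the map $f(\alpha)$ being an equivalence of stacks means precisely that it is a local weak equivalence between stacks, and by \cite[Proposition 9.28]{J} this is equivalent to $f(\alpha)$ being a \emph{sectionwise} weak equivalence. Concretely, for every $U\in\cC$ the induced map of groupoids $f(\alpha)(U)\colon \cX(\alpha)(U)\to\cY(\alpha)(U)$ is an equivalence.

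With this in hand, I would fix an arbitrary $U\in\cC$ and consider the two $\Gamma$-diagrams in $\Grpd$ given by $\alpha\mapsto\cX(\alpha)(U)$ and $\alpha\mapsto\cY(\alpha)(U)$, together with the natural transformation between them induced by $f$. By the previous step every component of this transformation is an equivalence of groupoids, so Proposition \ref{prop:homotopy} applies directly and shows that the induced map
\[ \holim_{\Gamma}\cX(U) \to \holim_{\Gamma}\cY(U) \]
is an equivalence of groupoids.

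Finally, I would invoke the defining formula $(\holim_{\Gamma}\cX)(U)=\holim_{\Gamma}\cX(U)$ (and likewise for $\cY$) to identify the map just produced with the value $(f_*)(U)$ of the induced map on sections. Since $U$ was arbitrary, this shows that $f_*$ induces an equivalence of groupoids on every section, which is by definition a sectionwise weak equivalence.

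The only genuine content lies in the first step, the passage from ``equivalence of stacks'' to ``sectionwise weak equivalence'', and even that is imported wholesale from \cite[Proposition 9.28]{J}. I do not anticipate a real obstacle: once one recognizes that both the definition of $\holim_{\Gamma}$ for stacks and the notion of sectionwise weak equivalence are evaluated one object $U$ at a time, the proposition becomes the groupoid statement applied pointwise, with the stack hypothesis serving only to upgrade the given equivalences to sectionwise ones. The single point worth flagging is that the conclusion asserts $f_*$ to be a sectionwise weak equivalence rather than merely a local one; it is precisely the stack hypothesis on the $\cX(\alpha)$ and $\cY(\alpha)$, and not the homotopy limit construction itself, that supplies this strengthening.
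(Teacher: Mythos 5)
Your proposal is correct and is exactly the paper's argument: the paper's proof is the one-liner ``Apply Proposition \ref{prop:homotopy}'', with the steps you spell out (equivalence of stacks $=$ sectionwise weak equivalence via \cite[Proposition 9.28]{J}, then apply the groupoid result section by section) left implicit. Your write-up simply makes that reduction explicit.
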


\begin{proof}Apply Proposition \ref{prop:homotopy}.
\end{proof}

\begin{thm}\label{thm:stack}Let $\Gamma$ be a small category and let $\cX$ be a $\Gamma$-diagram of stacks. Then $\holim_{\Gamma}\cX$ is a stack.
\end{thm}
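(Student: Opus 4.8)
The plan is to phrase the stack condition itself as a homotopy limit and then interchange it with $\holim_{\Gamma}$. Recall that a presheaf of groupoids $F$ is a stack precisely when it satisfies descent: for every $U\in\cC$ and every covering $\cU=\{U_i\to U\}$ with \v{C}ech nerve $\check C\cU$ (a simplicial object of $\cC$, with $(\check C\cU)_n=\coprod U_{i_0}\times_U\cdots\times_U U_{i_n}$), the canonical comparison map
\[ F(U) \longrightarrow \holim_{\Delta} F(\check C\cU) \]
is an equivalence of groupoids. Here $\Delta$ is the simplex category and the right-hand side is the homotopy limit of the cosimplicial groupoid $[n]\mapsto F((\check C\cU)_n)$, which one identifies with the groupoid of descent data for $\cU$. (Since the nerve of a groupoid is $2$-coskeletal, this homotopy limit only feels the truncation $\Delta_{\le 2}$; in any case $\Delta$ is a small category. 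See \cite{J}.) So it suffices to verify this descent condition for $F=\holim_{\Gamma}\cX$.

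Fix $U$ and a covering $\cU$. Since each $\cX(\alpha)$ is a stack, the descent map is an equivalence $\cX(\alpha)(U)\mapright{\sim}\holim_{\Delta}\cX(\alpha)(\check C\cU)$ for every $\alpha$. These maps are natural in $\alpha$ (descent is functorial in the presheaf), so they assemble into a map of $\Gamma$-diagrams in $\Grpd$
\[ \cX(U)\longrightarrow \holim_{\Delta}\cX(\check C\cU) \]
that is an objectwise equivalence, where the target is the $\Gamma$-diagram $\alpha\mapsto\holim_{\Delta}\cX(\alpha)(\check C\cU)$. By Proposition \ref{prop:homotopy}, applying $\holim_{\Gamma}$ preserves this equivalence, yielding
\[ \holim_{\Gamma}\cX(U)\mapright{\sim}\holim_{\Gamma}\holim_{\Delta}\cX(\check C\cU). \]

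Next I would invoke Proposition \ref{prop:commute} on the $\Gamma\times\Delta$-diagram $(\alpha,[n])\mapsto\cX(\alpha)((\check C\cU)_n)$ to obtain a canonical isomorphism $\holim_{\Gamma}\holim_{\Delta}\cX(\check C\cU)\cong\holim_{\Delta}\holim_{\Gamma}\cX(\check C\cU)$; and since $\holim_{\Gamma}$ is computed sectionwise, the latter equals $\holim_{\Delta}(\holim_{\Gamma}\cX)(\check C\cU)$. Likewise $\holim_{\Gamma}\cX(U)=(\holim_{\Gamma}\cX)(U)$. Composing, the descent map for $\holim_{\Gamma}\cX$,
\[ (\holim_{\Gamma}\cX)(U)\longrightarrow\holim_{\Delta}(\holim_{\Gamma}\cX)(\check C\cU), \]
is an equivalence, so $\holim_{\Gamma}\cX$ satisfies descent and is therefore a stack.

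The one place needing care is the first step: matching the paper's definition of stack (sheaf of groupoids with effective descent) with the homotopy-limit formulation of descent, and confirming that the comparison maps are genuinely natural in $\alpha\in\Gamma$, so that Proposition \ref{prop:homotopy} applies to a bona fide map of $\Gamma$-diagrams. I expect this identification to be the main obstacle, as everything afterward is a formal manipulation of the two cited propositions. Note in particular that no finiteness of $\Gamma$ or $\Delta$ is required here — in contrast to the filtered-colimit results — since Proposition \ref{prop:commute} holds for arbitrary small categories, and the possibly infinite products over the index set of $\cU$ cause no difficulty because they are simply limits in $\Grpd$.
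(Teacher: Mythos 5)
Your argument is correct in substance, but it is a genuinely different proof from the paper's. The paper never touches \v{C}ech nerves: it passes to an objectwise injective fibrant replacement $\cX\to R\cX$ in Jardine's model structure \cite[Proposition 9.19]{J}, notes that injective fibrant objects are stacks and that local weak equivalences between stacks are sectionwise, so $\holim_{\Gamma}\cX\to\holim_{\Gamma}R\cX$ is a sectionwise weak equivalence by Proposition \ref{prop:stackinv}, and then quotes \cite[Theorem 18.5.2 (2)]{Hir} that homotopy limits preserve injective fibrancy. Your route instead verifies descent directly, trading Hirschhorn's fibrancy theorem for the identification of the stack condition with the \v{C}ech homotopy-limit condition, and then commuting the two homotopy limits via Proposition \ref{prop:commute}; this buys a proof with no fibrant replacement and makes the ``no finiteness on $\Gamma$'' phenomenon transparent, at the cost of the identification you rightly flag. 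That identification is standard (the object part of $\holim_{\Delta}$ of a cosimplicial groupoid is exactly a descent datum, by $2$-coskeletality), but one genuine point you only gesture at deserves emphasis: the paper's definition of stack includes being a \emph{strict} sheaf of groupoids, a property not invariant under sectionwise equivalence, so the \v{C}ech criterion alone does not suffice. You must check separately that $\holim_{\Gamma}\cX$ is a sheaf of groupoids; this is immediate, since the homotopy limit is an equalizer of products of objects $\Map((\Gamma\downarrow\alpha),\cX(\alpha))$, and $\Map(K,-)$, products and equalizers are all limits, hence preserve sheaves sectionwise. (The paper's own proof quietly relies on the same observation when concluding that $\holim_{\Gamma}\cX$ is a stack from its sectionwise equivalence with $\holim_{\Gamma}R\cX$.) With that supplement, your proof is complete and matches the theorem's full generality.
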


\begin{proof}
    %One may apply Proposition \ref{prop:homotopy} and Proposition \ref{prop:commute} to the characterization of stacks given in \cite[Theorem 1.1]{H} (the characterization is in terms of sectionwise weak equivalences with certain auxilliary homotopy limits associated to every cover in $\cC$; Proposition \ref{prop:commute} allows us to commute these with $\holim_{\Gamma}$, and then Proposition \ref{prop:homotopy} applies). 
    %
    Use the injective model structure of \cite[Proposition 9.19]{J}. Let $Y\mapright{\sim} RY$ denote a canonical injective fibrant model for $Y\in\PreGrpd(\cC)$. In particular, $Y\mapright{\sim} RY$ is a local weak equivalence and $RY$ is injective fibrant.
    Every injective fibrant object is a stack \cite[Remark 9.23, Proposition 9.28]{J}. 
    Thus, if each $\cX(\alpha)$ is a stack, then $\holim_{\Gamma}\cX \to \holim_{\Gamma}R\cX$ is a sectionwise weak equivalence by Proposition \ref{prop:stackinv}. Further, homotopy limits preserve the class of injective fibrant objects by \cite[Theorem 18.5.2 (2)]{Hir}.
    So $\holim_{\Gamma}R\cX$ is a stack. Hence, $\holim_{\Gamma}\cX$ is a stack. 
\end{proof}

\begin{thm}\label{thm:commute}Let $\Gamma$ and $\Delta$ be small categories and let $X$ be a $\Gamma\times\Delta$-diagram in $\PreGrpd(\cC)$. Then we have canonical isomorphisms
    \[ \holim_{\Gamma}\holim_{\Delta}X \cong \holim_{\Gamma \times \Delta}X \cong \holim_{\Delta}\holim_{\Gamma}X.\]
\end{thm}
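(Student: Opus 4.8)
The plan is to reduce the statement to the groupoid-level Proposition \ref{prop:commute} by exploiting that homotopy limits in $\PreGrpd(\cC)$ are computed sectionwise. The two isomorphisms in the claim are symmetric under interchanging $\Gamma$ and $\Delta$, so it suffices to produce a canonical isomorphism $\holim_{\Gamma}\holim_{\Delta}X \cong \holim_{\Gamma\times\Delta}X$.

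First I would fix $U\in\cC$ and unwind the sectionwise definitions. For each $\gamma\in\Gamma$ the presheaf $(\holim_{\Delta}X)(\gamma)$ is $\holim_{\Delta}X(\gamma,-)$, so its value at $U$ is $\holim_{\Delta}(X(\gamma,-)(U))$; assembling over $\gamma$ and applying the sectionwise definition once more gives a canonical identification
\[ (\holim_{\Gamma}\holim_{\Delta}X)(U) = \holim_{\Gamma}\holim_{\Delta}X(U), \]
where $X(U)$ is the $\Gamma\times\Delta$-diagram of groupoids $(\gamma,\delta)\mapsto X(\gamma,\delta)(U)$. Similarly $(\holim_{\Gamma\times\Delta}X)(U) = \holim_{\Gamma\times\Delta}X(U)$. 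Now Proposition \ref{prop:commute}, applied to the groupoid-valued diagram $X(U)$, supplies a canonical isomorphism $\holim_{\Gamma}\holim_{\Delta}X(U)\cong\holim_{\Gamma\times\Delta}X(U)$. Composing, I obtain an isomorphism of groupoids $(\holim_{\Gamma}\holim_{\Delta}X)(U)\cong(\holim_{\Gamma\times\Delta}X)(U)$ for every $U$.

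The point that needs care — and the only real obstacle — is naturality in $U$: these sectionwise isomorphisms must be compatible with the restriction maps of $\cC$ in order to glue into a morphism of presheaves. This is precisely where the word `canonical' in Proposition \ref{prop:commute} is essential. The isomorphism produced there is built entirely from the structure of $\Map(K,-)$, the exponential law, and the commutativity of limits with one another, all of which are natural in the diagram argument. A restriction $U\to V$ in $\cC$ induces a map of $\Gamma\times\Delta$-diagrams of groupoids $X(V)\to X(U)$, and naturality of the Proposition \ref{prop:commute} isomorphism along this map is exactly the restriction-compatibility we require. Hence the sectionwise isomorphisms assemble into the desired canonical isomorphism of presheaves of groupoids, and the second isomorphism in the statement follows by the symmetric argument.
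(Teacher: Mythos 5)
Your proof is correct and follows the paper's own route: the paper simply declares the theorem ``immediate from Proposition \ref{prop:commute}'', relying, exactly as you do, on the sectionwise definition of the homotopy limit and on the canonicity (naturality in the diagram) of the groupoid-level isomorphism to guarantee compatibility with restriction maps. Your spelled-out check of naturality in $U$ is the content the paper leaves implicit, so there is nothing to add.
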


\begin{proof}
    Immediate from Proposition \ref{prop:commute}.
\end{proof}

\begin{cor}\label{cor:pullback}Let $\cX,\cY,\cZ$ be stacks equipped with the action of a group $\Gamma$. Suppose $\cX\to \cZ$ and $\cY\to \cZ$ are $\Gamma$-equivariant maps. Then the stacks $(\cX\htimes_{\cZ} \cY)^{h\Gamma}$ and $\cX^{h\Gamma}\htimes_{\cZ^{h\Gamma}} \cY^{h\Gamma}$ are canonically equivalent.
\end{cor}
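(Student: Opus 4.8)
The plan is to recognize both stacks in the statement as iterated homotopy limits of a single diagram and then invoke Theorem \ref{thm:commute}. Write $P$ for the pullback category $\bullet\to\bullet\gets\bullet$ of Example \ref{eg:homotopypullback}, so that any homotopy pullback is $\holim_P$ of the evident $P$-diagram; the group $\Gamma$ will, as usual, be identified with the one-object category $\B\Gamma$.

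First I would assemble the equivariant data into a single $\B\Gamma\times P$-diagram $X$ in $\PreGrpd(\cC)$. Since $\cX\to\cZ$ and $\cY\to\cZ$ are $\Gamma$-equivariant, the whole pullback diagram $\{\cX\to\cZ\gets\cY\}$ carries a compatible $\Gamma$-action; packaged together this is exactly a functor $\B\Gamma\times P\to\PreGrpd(\cC)$. By the exponential law this single datum may be read either as a $\B\Gamma$-diagram valued in $P$-diagrams or as a $P$-diagram valued in $\B\Gamma$-diagrams.

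Next I would identify the two sides of the claimed equivalence with the two iterated homotopy limits of $X$. Taking $\holim_P$ first and then $\holim_{\B\Gamma}$ yields $(\cX\htimes_\cZ\cY)^{h\Gamma}$: by functoriality of $\holim_P$ the $\Gamma$-action it inherits from $X$ is precisely the action on the homotopy pullback, and applying $\holim_{\B\Gamma}$ is by definition passing to homotopy fixed points. Dually, taking $\holim_{\B\Gamma}$ first produces the pullback diagram $\{\cX^{h\Gamma}\to\cZ^{h\Gamma}\gets\cY^{h\Gamma}\}$, and then $\holim_P$ is the homotopy pullback $\cX^{h\Gamma}\htimes_{\cZ^{h\Gamma}}\cY^{h\Gamma}$. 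Thus
\[ (\cX\htimes_\cZ\cY)^{h\Gamma} = \holim_{\B\Gamma}\holim_P X \qquad\text{and}\qquad \cX^{h\Gamma}\htimes_{\cZ^{h\Gamma}}\cY^{h\Gamma} = \holim_P\holim_{\B\Gamma}X. \]

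Finally, Theorem \ref{thm:commute} supplies canonical isomorphisms $\holim_{\B\Gamma}\holim_P X \cong \holim_{\B\Gamma\times P}X \cong \holim_P\holim_{\B\Gamma}X$, giving the desired comparison—indeed a canonical isomorphism, which is stronger than the stated equivalence. As each $\cX(\alpha),\cY(\alpha),\cZ(\alpha)$ is a stack, Theorem \ref{thm:stack} ensures both iterated homotopy limits are again stacks, so this is an isomorphism of stacks. The one point demanding genuine care—the main obstacle, such as it is—lies in the previous paragraph: verifying that the $\Gamma$-action induced on $\holim_P X$ really coincides with the action coming from the $\B\Gamma\times P$-structure, and symmetrically that the structure maps of the pullback diagram of fixed points match those of $\holim_{\B\Gamma}X$. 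These are naturality checks on the defining equalizer formula rather than substantive difficulties.
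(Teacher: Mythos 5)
Your proposal is correct and is exactly the argument the paper intends: the corollary is stated immediately after Theorem \ref{thm:commute} precisely because it follows by packaging the equivariant pullback data as a $\B\Gamma\times P$-diagram and commuting the two homotopy limits. Your additional observations (the isomorphism being stronger than an equivalence, and Theorem \ref{thm:stack} guaranteeing both sides are stacks) are accurate and consistent with the paper.
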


\begin{cor}\label{cor:inertia}Let $\cX$ be a stack equipped with the action of a group $\Gamma$. Then the stacks $(L\cX)^{h\Gamma}$ and $L\cX^{h\Gamma}$ are canonically equivalent.
\end{cor}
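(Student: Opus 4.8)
The plan is to realize both sides as iterated homotopy limits of a single $\Gamma\times\ZZ$-diagram and then invoke Theorem \ref{thm:commute}. Following the convention of Example \ref{eg:hfix}, write $\Gamma$ and $\ZZ$ for the one-object categories $\B\Gamma$ and $\B\ZZ$, so that the product category is $\B\Gamma\times\B\ZZ\cong\B(\Gamma\times\ZZ)$. Since $\ZZ$ acts trivially on $\cX$ (this is what defines $L$), the given $\Gamma$-action extends, trivially in the second factor, to a $\Gamma\times\ZZ$-action; equivalently, $\cX$ becomes a $\Gamma\times\ZZ$-diagram in $\PreGrpd(\cC)$. Applying Theorem \ref{thm:commute} to this diagram yields canonical isomorphisms
\[ \holim_{\Gamma}\holim_{\ZZ}\cX \cong \holim_{\Gamma\times\ZZ}\cX \cong \holim_{\ZZ}\holim_{\Gamma}\cX. \]

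The remaining work is to identify the two outer iterated limits with the constructions in the statement. For the left-hand side, the inner homotopy limit $\holim_{\ZZ}\cX$ is taken over the trivial $\ZZ$-action, so by Example \ref{eg:inertia} it is the free loop stack $L\cX$; crucially, functoriality of $\holim_{\ZZ}$ equips $L\cX$ with the residual $\Gamma$-action, and one checks on the explicit description $L\cX=\{(x,\phi)\mid x\in\cX,\ \phi\in\Aut(x)\}$ that this residual action is $g\cdot(x,\phi)=(gx,g\phi)$, i.e. the standard $\Gamma$-action on the loop stack. Hence $\holim_{\Gamma}\holim_{\ZZ}\cX=(L\cX)^{h\Gamma}$. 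Symmetrically, $\holim_{\Gamma}\cX=\cX^{h\Gamma}$ by Example \ref{eg:hfix}, and because $\ZZ$ acted trivially on $\cX$ the induced residual $\ZZ$-action on $\cX^{h\Gamma}$ is again trivial, whence $\holim_{\ZZ}\cX^{h\Gamma}=L(\cX^{h\Gamma})$. Combining with the displayed isomorphisms gives a canonical isomorphism $(L\cX)^{h\Gamma}\cong L(\cX^{h\Gamma})$, which is in particular the asserted equivalence; that both sides are genuinely stacks follows from Theorem \ref{thm:stack}.

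The main obstacle is precisely the bookkeeping of these residual actions: Theorem \ref{thm:commute} is a purely formal commutation of limits and says nothing on its own about which $\Gamma$- or $\ZZ$-action governs each outer limit. The substance of the proof is therefore the verification that the $\Gamma$-action induced on $\holim_{\ZZ}\cX$ agrees with the natural $\Gamma$-action on the loop stack, and dually that triviality of the $\ZZ$-action is inherited by $\cX^{h\Gamma}$. Both checks are routine once one unwinds the defining equalizer and the functoriality of $\holim$, and neither requires anything beyond the explicit models recorded in Examples \ref{eg:hfix} and \ref{eg:inertia}.
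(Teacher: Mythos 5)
Your proposal is correct and follows exactly the route the paper intends: Corollary \ref{cor:inertia} is stated without a separate proof precisely because it is the instance of Theorem \ref{thm:commute} applied to $\cX$ viewed as a $\Gamma\times\ZZ$-diagram (with $\ZZ$ acting trivially), yielding $(L\cX)^{h\Gamma}\cong\holim_{\Gamma\times\ZZ}\cX\cong L(\cX^{h\Gamma})$. Your additional bookkeeping---checking that the residual $\Gamma$-action on $\holim_{\ZZ}\cX$ is the standard action on the loop stack and that triviality of the $\ZZ$-action passes to $\cX^{h\Gamma}$---is a sound and worthwhile elaboration of what the paper leaves implicit.
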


\begin{thm}\label{thm:main}
    Let $\Gamma$ be a finite category and let $f\colon X\to Y$ be a map of $\Gamma$-diagrams in $\PreGrpd(\cC)$.
    If each $f(\alpha)\colon X(\alpha) \to Y(\alpha)$, $\alpha\in\Gamma$, is a local weak equivalence (resp. local fibration), then the induced map 
            \[f_*\colon \holim_{\Gamma} X \to \holim_{\Gamma} Y\]
            is a local weak equivalence (resp. local fibration).
\end{thm}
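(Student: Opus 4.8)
The plan is to reduce everything to a statement about stalks, where the definitions of local weak equivalence and local fibration live, and then to transport the question from the homotopy limit of presheaves to the homotopy limit of the diagram of stalks. Since a stalk at a point $p$ is computed as a filtered colimit $\varinjlim_I$ over the (filtered) category $I$ of neighborhoods of $p$, and since $\Gamma$ is \emph{finite}, the decisive input will be Proposition \ref{prop:colimits}: filtered colimits commute with homotopy limits indexed by finite categories.

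Concretely, first I would fix a point $p$ of $\cC$ and express its stalk functor as $\varinjlim_I$. Evaluating the $\Gamma$-diagram $X$ on sections over the neighborhoods indexed by $I$ produces an $I\times\Gamma$-diagram in $\Grpd$, with value $X(\alpha)(U_i)$ at $(i,\alpha)$. Because the homotopy limit of presheaves is defined sectionwise, the stalk of $\holim_{\Gamma}X$ at $p$ is
\[ (\holim_{\Gamma}X)_p \;=\; \varinjlim_I \holim_{\Gamma}X(U_i). \]
Applying Proposition \ref{prop:colimits} (the one step that genuinely uses finiteness of $\Gamma$) then gives a canonical isomorphism
\[ \varinjlim_I \holim_{\Gamma}X(U_i) \;\cong\; \holim_{\Gamma}\varinjlim_I X(U_i) \;=\; \holim_{\Gamma}X_p, \]
where $X_p$ denotes the $\Gamma$-diagram in $\Grpd$ whose value at $\alpha$ is the stalk $X(\alpha)_p$. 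The same computation applies to $Y$, and these identifications are natural in $f$, so under them the stalk of $f_*$ at $p$ is identified with the map $\holim_{\Gamma}X_p \to \holim_{\Gamma}Y_p$ induced by the stalk diagram map $f_p\colon X_p\to Y_p$.

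With this reduction in hand, I would finish by invoking Proposition \ref{prop:homotopy}. By hypothesis $f$ is a local weak equivalence (resp. local fibration), which by definition means that for every point $p$ each component $f(\alpha)_p\colon X(\alpha)_p\to Y(\alpha)_p$ is an equivalence (resp. fibration) of groupoids; that is, $f_p\colon X_p\to Y_p$ is a levelwise equivalence (resp. fibration) of $\Gamma$-diagrams in $\Grpd$. Proposition \ref{prop:homotopy} then guarantees that the induced map $\holim_{\Gamma}X_p \to \holim_{\Gamma}Y_p$ is an equivalence (resp. fibration) of groupoids. As this holds at every point $p$, the map $f_*$ induces an equivalence (resp. fibration) on all stalks, hence is a local weak equivalence (resp. local fibration), as desired.

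The essential difficulty, and the only place where the hypothesis that $\Gamma$ is finite cannot be dropped, is the colimit/limit interchange of Proposition \ref{prop:colimits}, which ultimately rests on Lemma \ref{lem:key}. It is precisely here that groupoids behave better than general simplicial sets (the ``canonical mistake'' of the introduction): a finite groupoid need not have a compact nerve, so the na\"ive simplicial analogue of the interchange fails, whereas the combinatorial argument of Lemma \ref{lem:key} makes it hold for diagrams in $\Grpd$. Everything else is the formal bookkeeping of checking that the isomorphisms above are natural in $f$.
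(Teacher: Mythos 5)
Your proposal is correct and is exactly the paper's intended argument: the paper's proof reads, in its entirety, ``Apply Proposition \ref{prop:homotopy} and Proposition \ref{prop:colimits},'' and you have simply unwound this --- stalks are filtered colimits, homotopy limits of presheaves are sectionwise, Proposition \ref{prop:colimits} (using finiteness of $\Gamma$) interchanges the two, and Proposition \ref{prop:homotopy} handles the resulting stalkwise diagrams. Your closing observation about Lemma \ref{lem:key} and the failure of the simplicial analogue matches the paper's own framing in the introduction.
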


\begin{proof}
    Apply Proposition \ref{prop:homotopy} and Proposition \ref{prop:colimits}.
\end{proof}

\begin{thm}\label{thm:presentation}Let $\Gamma$ be a finite category and let $X$ be a $\Gamma$-diagram in $\PreGrpd(\cC)$. Then the stacks $[\holim_{\Gamma}X]$ and $\holim_{\Gamma}[X]$ are canonically equivalent.
\end{thm}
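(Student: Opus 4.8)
The plan is to realize both $[\holim_{\Gamma}X]$ and $\holim_{\Gamma}[X]$ as stack completions of the single presheaf of groupoids $\holim_{\Gamma}X$, and then to appeal to the essential uniqueness of stack completion.

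First I would consider the canonical natural transformation of $\Gamma$-diagrams $X\to[X]$ furnished by the stackification units $X(\alpha)\mapright{\sim}[X(\alpha)]$. By construction each component is a local weak equivalence. Since $\Gamma$ is finite, Theorem \ref{thm:main} applies and shows that the induced map
\[ \holim_{\Gamma}X \to \holim_{\Gamma}[X] \]
is again a local weak equivalence. Moreover each $[X](\alpha)$ is a stack, so Theorem \ref{thm:stack} guarantees that $\holim_{\Gamma}[X]$ is a stack. Thus $\holim_{\Gamma}X\to\holim_{\Gamma}[X]$ is a local weak equivalence whose target is a stack.

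Next I would stackify this map. Stackification carries local weak equivalences to local weak equivalences: this is a formal consequence of naturality of the unit together with two-out-of-three, since for a local weak equivalence $f\colon A\to B$ the composite $A\to[A]\to[B]$ equals $A\to B\to[B]$, which is a local weak equivalence, whence $[A]\to[B]$ is one as well. Applying this to $\holim_{\Gamma}X\to\holim_{\Gamma}[X]$ yields a local weak equivalence $[\holim_{\Gamma}X]\to[\holim_{\Gamma}[X]]$ between stacks, which is therefore a sectionwise weak equivalence by \cite[Proposition 9.28]{J}. Finally, because $\holim_{\Gamma}[X]$ is already a stack, its own stackification unit $\holim_{\Gamma}[X]\mapright{\sim}[\holim_{\Gamma}[X]]$ is a local weak equivalence between stacks, hence again a sectionwise equivalence. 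Composing the resulting equivalences produces the desired canonical equivalence
\[ [\holim_{\Gamma}X] \simeq [\holim_{\Gamma}[X]] \simeq \holim_{\Gamma}[X]. \]

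The heart of the argument, and the only place finiteness of $\Gamma$ enters, is the invocation of Theorem \ref{thm:main}. The na\"ive impulse would be to apply Proposition \ref{prop:homotopy} or Proposition \ref{prop:stackinv} directly; but these require sectionwise equivalences, whereas the unit $X\to[X]$ is merely a local weak equivalence (a stalkwise, not sectionwise, equivalence --- see Example \ref{eg:BG}). Passing from the stalkwise statement to homotopy limits is precisely what fails for infinite $\Gamma$, and is rescued here by Theorem \ref{thm:main}, which in turn rests on the interchange of filtered colimits with finite homotopy limits (Proposition \ref{prop:colimits}). Everything else is the formal calculus of stack completion as a fibrant replacement, so I expect this step to be the main obstacle and the others to be routine.
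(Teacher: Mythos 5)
Your proof is correct and follows essentially the same route as the paper's: the unit $X\to[X]$ is an objectwise local weak equivalence, Theorem \ref{thm:main} (this is where finiteness of $\Gamma$ enters) gives that $\holim_{\Gamma}X\to\holim_{\Gamma}[X]$ is a local weak equivalence, and Theorem \ref{thm:stack} shows the target is a stack. The only difference is that you spell out the concluding uniqueness-of-stackification step (stackifying the map and invoking \cite[Proposition 9.28]{J} twice), which the paper compresses into ``we are done''; your expanded version is a faithful and correct filling-in of that step.
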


\begin{proof}
    The canonical map $X(\alpha)\to [X(\alpha)]$ is a local weak equivalence for each $\alpha$. So, if $\Gamma$ is a finite category, $\holim_{\Gamma}X \to \holim_{\Gamma}[X]$ is a local weak equivalence by Theorem \ref{thm:main}. As $\holim_{\Gamma}[X]$ is already a stack by Theorem \ref{thm:stack}, we are done.
\end{proof}

\begin{cor}\label{cor:hfixstack}
    Let $\Gamma$ be a finite group acting on $X\in\PreGrpd$. Then the stacks $[X^{h\Gamma}]$ and $[X]^{h\Gamma}$ are canonically equivalent.
\end{cor}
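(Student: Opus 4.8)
The plan is to realize this corollary as the special case of Theorem \ref{thm:presentation} in which the finite indexing category is the one-object category $\B\Gamma$ attached to the group $\Gamma$. First I would recall, following the convention of Example \ref{eg:hfix}, that a $\Gamma$-action on $X$ is exactly the datum of a $\B\Gamma$-diagram in $\PreGrpd(\cC)$, and that by definition $X^{h\Gamma} = \holim_{\B\Gamma} X$. The key point is that $\B\Gamma$ is a \emph{finite} category precisely because $\Gamma$ is a finite group: it has a single object and exactly $|\Gamma|$ morphisms. Hence the finiteness hypothesis of Theorem \ref{thm:presentation} is satisfied.

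Next I would note that stack completion is functorial, so the $\Gamma$-action on $X$ transports to a $\Gamma$-action on $[X]$; in the diagram language introduced before Example \ref{eg:BG}, the stackified diagram $[X]$ is again a $\B\Gamma$-diagram, and $[X]^{h\Gamma} = \holim_{\B\Gamma}[X]$. With both sides of the desired equivalence rewritten as homotopy limits over $\B\Gamma$---namely $[X^{h\Gamma}] = [\holim_{\B\Gamma} X]$ and $[X]^{h\Gamma} = \holim_{\B\Gamma}[X]$---the statement is literally the conclusion of Theorem \ref{thm:presentation} applied to the finite category $\B\Gamma$.

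Consequently the proof reduces to a single invocation of Theorem \ref{thm:presentation} for $\B\Gamma$. There is no genuine obstacle, since all of the content is absorbed into that theorem (which itself rests on Theorem \ref{thm:main} together with the commutation of finite homotopy limits with filtered colimits, Proposition \ref{prop:colimits}). The only thing I would take care to verify explicitly is the finiteness of $\B\Gamma$, as this is exactly where the hypothesis that $\Gamma$ is a \emph{finite} group is used: for an infinite group $\B\Gamma$ would fail to be a finite category and Theorem \ref{thm:presentation} would no longer apply.
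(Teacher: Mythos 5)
Your proposal is correct and is exactly the argument the paper intends: the corollary is stated without proof as an immediate specialization of Theorem \ref{thm:presentation} to the one-object category $\B\Gamma$, which is finite precisely when $\Gamma$ is a finite group. Your extra care in noting that functoriality of stack completion makes $[X]$ a $\B\Gamma$-diagram, so that $[X]^{h\Gamma} = \holim_{\B\Gamma}[X]$, fills in the only detail the paper leaves tacit.
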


\begin{thm}\label{thm:lastbeforeeg}Let $I$ be a small filtered category and let $\Gamma$ be a finite category. Let $X$ be a $I\times \Gamma$-diagram in $\PreGrpd(\cC)$. Then the canonical map
    \[ \varinjlim_I \holim_{\Gamma} X \to \holim_{\Gamma}\varinjlim_I X \]
    is an isomorphism.
\end{thm}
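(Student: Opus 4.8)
The plan is to reduce the statement to sections and then invoke Proposition~\ref{prop:colimits} verbatim. The two operations appearing in the theorem are both computed objectwise on $\cC$: by definition, the homotopy limit in $\PreGrpd(\cC)$ is formed sectionwise, so $(\holim_{\Gamma} Z)(U) = \holim_{\Gamma} Z(U)$ for any $\Gamma$-diagram $Z$ and any $U\in\cC$; and filtered colimits (indeed all colimits) of presheaves are likewise computed sectionwise, so that $(\varinjlim_I X)(U) = \varinjlim_I X(U)$ in $\Grpd$. First I would record these two facts, the second being the standard observation that colimits in a presheaf category are inherited pointwise from the target category.

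Next, I would fix $U\in\cC$ and evaluate the canonical map at $U$. Using the two identifications above, the map
\[ \bigl(\varinjlim_I \holim_{\Gamma} X\bigr)(U) \to \bigl(\holim_{\Gamma}\varinjlim_I X\bigr)(U) \]
becomes a map of groupoids
\[ \varinjlim_I \holim_{\Gamma} X(U) \to \holim_{\Gamma}\varinjlim_I X(U), \]
where $X(U)$ denotes the $I\times\Gamma$-diagram in $\Grpd$ obtained by evaluating $X$ at $U$. The crux is to check that this is precisely the canonical comparison map of Proposition~\ref{prop:colimits} for the diagram $X(U)$. Since the filtered colimit on the left is a colimit taken in $\PreGrpd(\cC)$ and is therefore computed sectionwise, and since the structure map $\holim_{\Gamma} X \to \holim_{\Gamma}\varinjlim_I X$ is induced sectionwise by functoriality of $\holim_{\Gamma}$ applied to $X(i)\to\varinjlim_I X$, both sides of the universal property defining the canonical map are visibly compatible with evaluation at $U$. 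Hence the canonical map in $\PreGrpd(\cC)$ is, section by section, the canonical map in $\Grpd$.

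Once this identification is in place, Proposition~\ref{prop:colimits} (which is exactly the filtered-colimit/finite-homotopy-limit interchange for groupoids, $I$ filtered and $\Gamma$ finite) tells us that the map is an isomorphism of groupoids for every $U\in\cC$. A map in $\PreGrpd(\cC)$ that is an isomorphism on all sections is an isomorphism, so we conclude.

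I expect the only genuine point requiring care — and the single step I would write out rather than assert — is the middle paragraph: verifying that forming the canonical comparison map commutes with evaluation at $U$, i.e. that the sectionwise description of both colimit and homotopy limit is compatible with the maps defining the universal arrow. This is routine but is where one must resist hand-waving, since it is what licenses the clean appeal to Proposition~\ref{prop:colimits}; no new homotopy-theoretic input (and in particular no finiteness beyond that already used in Lemma~\ref{lem:key}) is needed.
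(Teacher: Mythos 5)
Your proof is correct and is exactly the argument the paper intends: its one-line proof (``Apply Proposition \ref{prop:colimits}'') is precisely the sectionwise reduction you spell out, using that both $\holim_{\Gamma}$ and filtered colimits in $\PreGrpd(\cC)$ are computed sectionwise. You have simply made explicit the routine compatibility check that the paper leaves implicit.
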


\begin{proof}
    Apply Proposition \ref{prop:colimits}.
\end{proof}

\begin{example}\label{eg:final}
    Let $\cC$ be the big \'etale site of complex varieties. 
    Let $\Gamma$ be a finite group acting on a linear algebraic group $G$ via automorphisms. 
    We will use the notation of Example \ref{eg:galois} and Example \ref{eg:BG}. In particular, $BG(U)$ denotes the groupoid consisting of a single object with automorphisms $G(U)$, for all $U\in\cC$; $[BG]$ is its stack completion - the stack of principal $G$-bundles; $Z^1(\Gamma; G(U))$ is the set of 1-cocycles, etc.
    Then $\Gamma$ acts on $BG$ and we have a canonical isomorphism
    \[ (BG)^{h\Gamma} \mapright{\cong} E_G Z^1(\Gamma; G), \]
    where, for $U\in \cC$, in the notation of Example \ref{eg:hfix},
    \[ E_G Z^1(\Gamma; G)(U) = \E_{G(U)} Z^1(\Gamma; G(U)).\]
    As $\Gamma$ is finite, and in particular has a finite presentation, the presheaf $U\mapsto Z^1(\Gamma; G(U))$ is representable by an affine $G$-variety $Z$. 
    Explicitly, $Z$ is the variety of group homomorphisms $\Gamma \to G\rtimes \Gamma$ that are sections of the canonical map $G\rtimes \Gamma \to \Gamma$.

    Embed $G\rtimes \Gamma$ in some $GL_n$ and consider the representation variety $Y$ of group homomorphisms $\Gamma \to GL_n$. Every representation of $\Gamma$ over $\CC$ is completely reducible. Further, for a fixed dimension there are finitely many isomorphism classes of these.
    So, under the conjugation action of $GL_n$, the variety $Y$ has finitely many orbits and each orbit is closed \cite[Theorem 2.17]{PR}. On the other hand, $Z$ embeds into $Y$, and each $G$-orbit in $Z$ is the intersection of some $GL_n$-orbit with $Z$ (see \cite[Lemma 2.11 and Theorem 2.17]{PR}). Thus, there are finitely many $G$-orbits in $Z$ and they are all closed. Consequently, $Z$ is isomorphic to the disjoint union of its orbits.
These orbits are parametrized by $H^1(\Gamma; G(\CC))$ (see Example \ref{eg:hfix}). Moreover, for a geometric point $\sigma \in Z^1(\Gamma; G(\CC))$, the stabilizer of $\sigma$ is
    \[ K_{\sigma} = \{ \alpha\in G \mid  \sigma(g)\cdot g\alpha\cdot \sigma(g)^{-1} = \alpha \text{ for all $g\in\Gamma$} \}.\]
    Thus, we have a local weak equivalence
    \[ (B G)^{h\Gamma} \simeq \bigsqcup_{[\sigma]\in H^1(\Gamma; G(\CC))} B K_{\sigma}.\]
    As in Example \ref{eg:hfix}, it is generally impossible to make this canonical.

    Write $[Z/G]$ for the stack completion of $E_G Z^1(\Gamma; G)$, i.e., the stack quotient of $Z$ by $G$. By Corollary \ref{cor:hfixstack}, we have equivalences of stacks:
    \[ [BG]^{h\Gamma} \simeq [Z/G] \simeq \bigsqcup_{[\sigma] \in H^1(\Gamma; G(\CC))} [BK_{\sigma}].\]
    The first of these equivalences is canonical. However, the second depends on the various choices made above and cannot usually be made canonical. In particular, it is generally impossible to identify the first and last spaces here. 
Readers may explore the consequences of Corollary \ref{cor:pullback} and Corollary \ref{cor:inertia} in this setting at their leisure.
\end{example}

\subsection{Derived functors}
Given a small category $\Gamma$, write $\PreGrpd_{\Gamma}(\cC)$ for the category of $\Gamma$-diagrams in $\PreGrpd(\cC)$. Call a natural transformation of $\Gamma$-diagrams $X\to Y$ an objectwise local weak equivalence if $X(\alpha)\to Y(\alpha)$ is a local weak equivalence for each $\alpha\in \Gamma$.
Let $\Ho(\cC)$ be the localization of $\PreGrpd(\cC)$ with repect to local weak equivalences, and let $\Ho_{\Gamma}(\cC)$ be the localization of $\PreGrpd_{\Gamma}(\cC)$ with respect to objectwise local weak equivalences. Let
\[ \iota\colon \PreGrpd(\cC) \to \PreGrpd_{\Gamma}(\cC) \]
be the functor which assigns to $X\in \PreGrpd(\cC)$ the constant $\Gamma$-diagram with value $X$ (all morphisms in $\Gamma$ are mapped to the identity on $X$). 
Then $\iota$ is left adjoint to
\[ \varprojlim_{\Gamma}\colon \PreGrpd_{\Gamma}(\cC) \to \PreGrpd(\cC).\]
Certainly, $\iota$ sends local weak equivalences to objectwise local weak equivalences. Hence, it induces a functor $\iota\colon \Ho(\cC) \to \Ho_{\Gamma}(\cC)$. However, since $\varprojlim_{\Gamma}$ does not usually map objectwise local weak equivalences to local weak equivalences, it does not immediately induce a functor $\Ho_{\Gamma}(\cC) \to \Ho(\cC)$. 

\begin{thm}The functor $X \mapsto \holim_{\Gamma}[X]$ is right adjoint to $\iota\colon \Ho(\cC) \to \Ho_{\Gamma}(\cC)$. In particular, there is a canonical isomorphism
    \[ \R\varprojlim_{\Gamma} X \cong \holim_{\Gamma}[X] \]
    in $\Ho(\cC)$, where $\R\varprojlim_{\Gamma}$ is the total right derived functor of $\varprojlim_{\Gamma}$ in the sense of \cite{Q}.
\end{thm}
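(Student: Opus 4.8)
The plan is to present the claimed adjunction as the derived adjunction of the Quillen pair $(\iota,\varprojlim_{\Gamma})$, and then to identify the abstract right-derived limit with the explicit functor $\holim_{\Gamma}[-]$. First I would equip $\PreGrpd(\cC)$ with Jardine's injective model structure \cite[Proposition 9.19]{J} (local weak equivalences as weak equivalences, monomorphisms as cofibrations), whose homotopy category is $\Ho(\cC)$. Since this structure is combinatorial, the diagram category $\PreGrpd_{\Gamma}(\cC)=\mathrm{Fun}(\Gamma,\PreGrpd(\cC))$ carries the associated injective model structure, with objectwise local weak equivalences and objectwise cofibrations, and its homotopy category is $\Ho_{\Gamma}(\cC)$. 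In this structure $\iota$ preserves cofibrations and trivial cofibrations (both are detected objectwise and $\iota$ is objectwise the identity), so $\iota\dashv\varprojlim_{\Gamma}$ is a Quillen adjunction. By Quillen's theory \cite{Q} the total derived functors give an adjunction $\L\iota\dashv\R\varprojlim_{\Gamma}$ between $\Ho_{\Gamma}(\cC)$ and $\Ho(\cC)$; and because $\iota$ sends every local weak equivalence to an objectwise local weak equivalence, $\L\iota$ is the induced functor $\iota$ itself. Thus $\iota\dashv\R\varprojlim_{\Gamma}$, and it remains to identify $\R\varprojlim_{\Gamma}$ with $\holim_{\Gamma}[-]$.

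To compute $\R\varprojlim_{\Gamma}X$ I would choose an injective-fibrant replacement $[X]\mapright{\sim}\hat X$ of the diagram $[X]$ (equivalently of $X$, since $X\to[X]$ is an objectwise local weak equivalence), so that $\R\varprojlim_{\Gamma}X\cong\varprojlim_{\Gamma}\hat X$. The evaluation functors are right Quillen for the injective structure — their left adjoints, the left Kan extensions along $\{\alpha\}\hookrightarrow\Gamma$, preserve objectwise monomorphisms and objectwise local weak equivalences — so $\hat X$ is objectwise injective fibrant, and in particular each $\hat X(\alpha)$ is a stack. As each $[X](\alpha)=[X(\alpha)]$ is also a stack and local weak equivalences between stacks are sectionwise \cite[Proposition 9.28]{J}, the map $[X]\to\hat X$ is an objectwise sectionwise weak equivalence of diagrams of stacks. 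Proposition \ref{prop:stackinv} then yields a sectionwise weak equivalence $\holim_{\Gamma}[X]\mapright{\sim}\holim_{\Gamma}\hat X$, natural in $X$.

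The remaining, and central, point is the canonical map $\varprojlim_{\Gamma}\hat X\to\holim_{\Gamma}\hat X$ and the claim that it is a weak equivalence because $\hat X$ is (objectwise) fibrant; this is exactly the assertion that the cobar (co)equalizer defining $\holim_{\Gamma}$ computes the right-derived limit on fibrant diagrams. I would verify it by the same passage to simplicial sets used in Proposition \ref{prop:homotopy}: applying the nerve identifies $\holim_{\Gamma}\hat X$ with the Bousfield--Kan homotopy limit of the objectwise Kan diagram $B\hat X$, and the standard comparison between the strict limit of an injective-fibrant diagram and its homotopy limit \cite{Hir} shows the map is a weak equivalence, a property the nerve reflects. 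Chaining the equivalences gives $\R\varprojlim_{\Gamma}X\cong\varprojlim_{\Gamma}\hat X\simeq\holim_{\Gamma}\hat X\simeq\holim_{\Gamma}[X]$, naturally in $\Ho(\cC)$; hence $\holim_{\Gamma}[-]\cong\R\varprojlim_{\Gamma}$ and the adjunction of the first paragraph transports to the stated one.

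I expect the main obstacle to be precisely this comparison step: the paper's $\holim_{\Gamma}$ is a concrete equalizer, and one must check that it agrees with the abstract total derived functor, not merely that it is homotopy invariant. The delicate feature is that the equality $\varprojlim_{\Gamma}=\holim_{\Gamma}$ demands genuine injective fibrancy of the \emph{diagram}, not just objectwise fibrancy; this is why the argument first replaces $[X]$ by a fully injective-fibrant $\hat X$ and only afterwards returns to $[X]$ via Proposition \ref{prop:stackinv}. The auxiliary facts — existence of the injective model structure on $\PreGrpd_{\Gamma}(\cC)$ and the Quillen property of $\iota\dashv\varprojlim_{\Gamma}$ — are routine consequences of the combinatoriality of Jardine's structure and should present no difficulty.
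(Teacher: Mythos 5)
Your argument is correct in outline, but it takes a genuinely different route from the paper, and the step you yourself flag as the crux is handled quite differently there. The paper never puts an injective model structure on $\PreGrpd_{\Gamma}(\cC)$ and never compares $\varprojlim_{\Gamma}$ with $\holim_{\Gamma}$ on fibrant diagrams. Instead it equips the diagram category with the \emph{projective} structure (objectwise fibrations and weak equivalences, via \cite[Theorem 11.6.1]{Hir}, which needs only cofibrant generation), and observes that the explicit equalizer $\holim_{\Gamma}$ has an honest left adjoint $\tilde\iota\colon X\mapsto\bigl(\alpha\mapsto\mu(\Gamma\downarrow\alpha)\times X\bigr)$, where $\mu$ inverts all arrows; the pair $(\tilde\iota,\holim_{\Gamma})$ is Quillen because $\holim_{\Gamma}$ preserves fibrations and trivial fibrations \cite[18.5.1]{Hir}, and $\tilde\iota\cong\iota$ on homotopy categories because each overcategory $(\Gamma\downarrow\alpha)$ has a terminal object, so $\mu(\Gamma\downarrow\alpha)\to\ast$ is an equivalence of groupoids. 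Thus the concrete $\holim_{\Gamma}$ \emph{is} a right Quillen functor, $\R\holim_{\Gamma}X=\holim_{\Gamma}(RX)\cong\holim_{\Gamma}[X]$, and the delicate comparison of the strict limit of an injective-fibrant diagram with the Bousfield--Kan homotopy limit is bypassed entirely. Your route does work, but two caveats: the existence of the injective structure on $\PreGrpd_{\Gamma}(\cC)$ requires combinatoriality (local presentability plus cofibrant generation), which Hirschhorn does not supply --- he constructs only the projective structure; and, more seriously, the ``standard comparison'' $\varprojlim_{\Gamma}\hat X\simeq\holim_{\Gamma}\hat X$ for injective-fibrant $\hat X$ is \emph{not} in \cite{Hir} either, so your citation at the pivotal step is empty as it stands --- you would need to prove it (e.g.\ exhibit both as weighted limits with weights $N(\Gamma\downarrow -)\to\ast$, note all weights are injective-cofibrant, and run an SM7/Ken Brown argument) or cite a source that treats injective structures on diagram categories. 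Your intermediate reductions --- evaluation functors being right Quillen so that $\hat X$ is objectwise a stack, and the passage back to $[X]$ via \cite[Proposition 9.28]{J} and Proposition \ref{prop:stackinv} --- are correct. In sum: your proof derives the theorem from general machinery and identifies $\R\varprojlim_{\Gamma}$ directly, at the cost of heavier foundations and one genuinely nontrivial comparison; the paper's $\tilde\iota$ trick is more elementary and self-contained, exploiting the groupoid-specific fact that $\mu(\Gamma\downarrow\alpha)$ is contractible to make the explicit homotopy limit formula itself the derived functor.
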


\begin{proof}
    Use the injective model structure on $\PreGrpd(\cC)$ from \cite[Proposition 9.19]{J}.
    In particular, weak equivalences in this model structure are local weak equivalences (the slightly confusing terminology is unavoidable). This induces a model structure on $\PreGrpd_{\Gamma}(\cC)$ as follows.
    Fibrations in $\PreGrpd_{\Gamma}(\cC)$ are those natural transformations that are objectwise injective fibrations. Similarly for weak equivalences. Cofibrations in $\PreGrpd_{\Gamma}(\cC)$ are maps that have the left lifting property with respect to maps which are both fibrations and weak equivalences. As $\PreGrpd(\cC)$ is cofibrantly generated \cite[Theorem 5.8]{J}, this defines a model structure on $\PreGrpd_{\Gamma}(\cC)$ by \cite[Theorem 11.6.1]{Hir}. 
    Consider the functor 
    \[ \tilde \iota\colon \PreGrpd(\cC) \to \PreGrpd_{\Gamma}(\cC) \]
    that assigns to $X\in \PreGrpd(\cC)$ the diagram
    \[ \alpha\mapsto \mu(\Gamma\downarrow \alpha) \times X, \]
    where $\mu(K)$ denotes the category obtained by inverting all morphisms in $K$ (i.e., the left adjoint to the forgetful functor from $\Grpd$ to the category of small categories), and $\mu(K)\times X$ is a sectionwise prescription:
    \[ (\mu(K)\times X)(U) = \mu(K) \times X(U),\]
    for all $U\in\cC$. Then $\tilde \iota$ is left adjoint to $\holim_{\Gamma}$.
    By \cite[Proposition 18.5.1 (2)]{Hir}, $\holim_{\Gamma}$ preserves fibrations as well as the class of fibrations that are also weak equivalences.
Hence, the total left derived functor
\[ \L \tilde \iota \colon \Ho(\cC) \to \Ho_{\Gamma}(\cC) \]
and the total right derived functor
\[ \R \holim_{\Gamma}\colon \Ho_{\Gamma}(\cC) \to\Ho(\cC) \]
exist and form an adjoint pair by \cite[Proposition 8.5.3 (3), Proposition 8.5.7]{Hir} and \cite[Chapter 1.4, Theorem 3]{Q}. As $\tilde \iota$ preserves weak equivalences, $\L \tilde \iota = \tilde \iota$. Since each overcategory $(\Gamma\downarrow \alpha)$ has a final object, the canonical map $\mu(\Gamma\downarrow \alpha)\to \ast$ is an equivalence of groupoids. It follows that $\tilde \iota$ and  $\iota$ are canonically isomorphic as functors $\Ho(\cC)\to\Ho_{\Gamma}(\cC)$.
Hence, it only remains to show that $\R\holim_{\Gamma}$ has the requisite form. By construction,
\[ \R\holim_{\Gamma} X = \holim_{\Gamma}(RX), \]
where $RX$ denotes the diagram obtained from $X$ by replacing each $X(\alpha)$ with a canonical injective fibrant model $RX(\alpha)$. On the other hand, we have a canonical isomorphism in $\Ho(\cC)$:
\[ \holim_{\Gamma}(RX) \cong \holim_{\Gamma}[X]. \qedhere \]
\end{proof}

\end{document}